\newtheorem{propo}{Proposition}[section]
\newtheorem{lemma}[propo]{Lemma}
\newtheorem{corol}[propo]{Corollary}
\newtheorem{theo}[propo]{Theorem}
\newcommand{\bl}{\begin{lemma}\label}
\newcommand{\el}{\end{lemma}}
\newcommand{\ld}{,\ldots ,}
\newcommand{\Id}{\mathop{\rm Id}\nolimits}
\newcommand{\CC}{\mathop{\mathbb C}\nolimits}
\newcommand{\al}{\alpha}
\newcommand{\be}{\beta}
\newcommand{\Ga}{\Gamma}
\newcommand{\lam}{\lambda }
\newcommand{\si}{\sigma }
\newcommand{\up}{^{-1}}
\newcommand{\bp}{\begin{proof}}
\newcommand{\enp}{\end{proof}}
\def\d12{{_{12}}}
\def\ei{{eigenvalue }}
\def\eis{{eigenvalues }}
\newcommand{\med}{\medskip}
\def\ii{{if and only if }}
\def\ir{{irreducible }}
\def\irr{{irreducible representation }}
\def\itf{{It follows that }}
\def\mult{{multiplicity }}
\def\rep{{representation }}
\def\reps{{representations }}
\def\Sym{{\rm Sym}}
\def\Alt{{\rm Alt}}
\def\Aut{{\rm Aut}}
\newcommand{\dne}{\hfill $\Box$   \\[-0.3cm]}
\newcommand{\gG}{\Gamma}
\newcommand{\g}{\gamma}
\newcommand{\gl}{\lambda}
\newcommand{\pf}{{\it Proof: \,}}
\newcommand{\Cay}{{\rm Cay}}
\newcommand{\x}{{\rm x}}
\newcommand{\cp}{\mathbb{C}}
\begin{document}

\title[On the second largest eigenvalue]{On the second largest eigenvalue of some \\Cayley graphs of the Symmetric Group}

\author[J. Siemons]{Johannes Siemons}
\address{School of Mathematics, University of East Anglia, University Plain, Norwich, NR47TJ, E-mail: J.Siemons@uea.ac.uk (Johannes Siemons)}

\author[A. Zalesski]{Alexandre Zalesski}
\address{ Department of Physics, Mathematics and Informatics, National Academy of Sciences of Belarus, 66 Prospekt  Nezavisimosti, Minsk, Belarus,
E-mail:   alexandre.zalesski@gmail.com (Alexandre Zalesski)}

\subjclass[2000]{20G05, 20G40}
\keywords{Symmetric group, Cayley graph, Adjacency matrix, Eigenvalues}
\maketitle
\centerline{{\it Dedicated to the memory of Gordon James}}

\bigskip\medskip

{\it Abstract:}\, Let $S_n$ and $A_{n}$ denote the symmetric and alternating group on the set  $\{1\ld n\},$ respectively. In this paper we are interested in the second largest eigenvalue $\lambda_{2}(\gG)$ of the Cayley graph $\gG=\Cay(G,H)$ over $G=S_{n}$ or $A_{n}$  for certain connecting sets $H.$

Let $1<k\leq n$ and denote the set of all $k$-cycles in $S_{n}$ by $C(n,k).$ For $H=C(n,n)$ we prove that $\lambda_{2}(\gG)=(n-2)!$ (when $n$ is even) and  $\lambda_{2}(\gG)=2(n-3)!$  (when $n$ is odd). Further, for $H=C(n,n-1)$ we have  $\lambda_{2}(\gG)=3(n-3)(n-5)!$ (when $n$ is even) and  $\lambda_{2}(\gG)=2(n-2)(n-5) !$  (when $n$ is odd). The case $H=C(n,3)$ has been considered in \cite{HH}.

Let   $1\leq r<k<n$ and let $C(n,k;r) \subseteq C(n,k)$ be set of all $k$-cycles in $S_{n}$ which move all the points in the set $\{1,2\ld r\}.$ That is to say, $g=(i_{1},i_{2}\ld i_{k})(i_{k+1})\dots(i_{n})\in C(n,k;r)$ if and only if $\{1,2\ld r\}\subset \{i_{1},i_{2}\ld i_{k}\}.$
Our main result concerns  $\lambda_{2}(\gG)$, where $\gG=\Cay(G,H)$ with $H=C(n,k;r)$ with $1\leq r<k<n$ when $G=S_{n}$ if $k$ is even  and $G=A_{n}$ if $k$ is odd.  Here we observe that $$\lambda_{2}(\gG)\geq (k-2)! {n-r \choose k-r} \frac{1}{n-r} \big((k-1)(n-k) - \frac{(k-r-1)(k-r)}{n-r-1}\big).$$ We show that this bound is sharp in the special case $k=r+1$ , giving  $\lambda_{2}(\gG)=r!(n-r-1)$. The cases with $H=C(n,3;1)$ and $H=C(n,3;2)$  were considered earlier in \cite{HH}.

%\phantom{Xx}Version of 19 February 2021, compiled \today

\section{Introduction}
Let $\gG$ be a finite undirected graph and let $A$ denote its adjacency matrix. The eigenvalues of this matrix, together with their multiplicities, are an important invariant of the graph. Since $A$ is symmetric all eigenvalues are real. For a regular graph of degree $d$ it is well-known that the largest eigenvalue of $A$ is $\gl_{1}(\gG)=d.$ The second largest eigenvalue, denoted by $\gl_{2}(\gG),$
plays an important role in many theoretical and practical applications of graph theory, from geometry to computer science.  The literature on the second largest eigenvalue is extensive, overviews and further references can be found in~\cite{An,BH, HLW, LZ,Lu,Sta}.
For regular graphs the spectral gap $\lambda_{1}(\Gamma)-\lambda_{2}(\Gamma)$ is known as the {\it algebraic connectivity} of $\gG.$

In this paper we are concerned with the second largest eigenvalue of certain Cayley graphs $\Cay(G,H)$ where $G$ is a symmetric or alternating group and where $H$ a connecting set in $G.$ (In particular,  we have $H=H^{-1}=\{h^{-1}\,|\,h\in H\},$ the identity element of $G$ does not belong to $H,$ and $H$ is not contained in any proper subgroup of $G.)$ For the definition of Cayley graphs see Section 2.4. References to recent work on the second largest eigenvalue of Cayley graphs over symmetric groups can be found in~\cite{HH}. Even for symmetric groups an explicit computation of it appears to be  one of the most challenging problem in the area.

To state our results let $S_n=\Sym(1\ld n)$ and $A_{n}$ be the symmetric and alternating group on the set  $\{1\ld n\},$ respectively.  For $1<k\leq n$ a $k$-cycle in $S_{n}$ is a permutation of the shape $g=(i_{1},i_{2}\ld i_{k})(i_{k+1})\dots(i_{n})$ with $|\{i_{1},i_{2}\ld i_{k}\}|=k.$ The set of all $k$-cycles in $S_{n}$ is denoted by $C(n,k).$

\begin{theo}\label{1A} Let $\gG=\Cay(G,H)$ with $H=C(n,n)$ for $n>4$ and $G=S_{n}$  when $n$ is even   and $G=A_{n}$ when $n$ is odd. Then $\lambda_{1}(\gG)=(n-1)!$. Furthermore, $\lambda_{2}(\gG)=(n-2)!$ when $n$ is even and $\lambda_{2}(\gG)=2(n-3)!$ when $n$ is odd.
\end{theo}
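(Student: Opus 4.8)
The natural framework is the representation theory of $S_n$ applied to the connection set $H = C(n,n)$, the class of $n$-cycles. Since $H$ is a union of conjugacy classes (it is a single class when $n$ is even, and splits into two classes fused by $A_n$-conjugacy when $n$ is odd, but in both cases it is invariant under conjugation by $G$), the adjacency matrix $A$ of $\gG = \Cay(G,H)$ is an element of the centre of the group algebra $\CC G$. Consequently $A$ acts as a scalar on each irreducible $\CC G$-module $V_\chi$, and that scalar is the familiar character ratio
\[
\om_\chi(H) \;=\; \frac{|H|\,\chi(g)}{\chi(1)}, \qquad g\in H.
\]
The eigenvalues of $A$ are therefore exactly the numbers $\om_\chi(H)$ as $\chi$ runs over $\Irr(G)$, each occurring with multiplicity $\chi(1)^2$. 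The trivial character gives $\om_{\mathrm{triv}}(H) = |H| = (n-1)!$, which is $\lambda_1(\gG)$, confirming the first assertion. The whole problem thus reduces to a purely character-theoretic one: maximise $|\chi(g)|\cdot|H|/\chi(1)$ over all nontrivial irreducibles, where $g$ is an $n$-cycle.

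First I would treat $G=S_n$ ($n$ even). Here the irreducibles are the Specht modules $S^\lam$ for partitions $\lam\vdash n$, and the Murnaghan--Nakayama rule gives a completely explicit value for $\chi^\lam$ on an $n$-cycle: $\chi^\lam(g)=0$ unless $\lam$ is a hook $(n-j,1^j)$, in which case $\chi^\lam(g)=(-1)^j$. Since the hook $(n-j,1^j)$ has dimension $\binom{n-1}{j}$, we get
\[
\om_{\lam}(H) \;=\; \frac{(-1)^j\,(n-1)!}{\binom{n-1}{j}} \;=\; (-1)^j\, j!\,(n-1-j)!\,.
\]
Now I would simply analyse this one-parameter family over $0\le j\le n-1$. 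The value at $j=0$ is $(n-1)!=\lambda_1$; the extreme values among the remaining $j$ are attained at $j=1$ and $j=n-2$, both giving $|{\om}|=(n-2)!$, while the sign is positive precisely when $j$ is even. The second largest \emph{eigenvalue} (not absolute value) is therefore $(n-2)!$, attained for instance at $j=2$ (the hook $(n-2,1,1)$), provided $n\geq 5$ so that such a hook exists with $j$ even and $\leq n-2$; one checks the negative values, the most negative being $-(n-2)!$ at $j=1$, do not exceed $(n-2)!$. This settles the even case, and the restriction $n>4$ enters exactly to guarantee enough room among the hooks.

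For $G=A_n$ ($n$ odd) the argument is the same in spirit but needs the branching from $S_n$ to $A_n$. When $n$ is odd, an $n$-cycle is an even permutation. The restriction $\chi^\lam|_{A_n}$ stays irreducible unless $\lam=\lam'$ is self-conjugate, in which case it splits as $\chi^+ + \chi^-$ of equal degree $\chi^\lam(1)/2$. The only self-conjugate hook is $(n-j,1^j)$ with $j=(n-1)/2$; for the split characters one must use the refined formula $\chi^\pm(g) = \tfrac12\big(\chi^\lam(g)\pm\sqrt{\pm\,h}\big)$ on the two $A_n$-classes of the split class, where $h$ is the relevant hook-length product, so that $|\chi^\pm(g)|$ may actually exceed $\tfrac12|\chi^\lam(g)|$. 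The key computational point — and the place I expect the real work to be — is to show that even after this correction, and after dividing by the halved degree, the resulting ratio is dominated by the contribution of the \emph{non}-self-conjugate hook $(n-2,1,1)$, for which (since its restriction to $A_n$ is irreducible and the class of $g$ does not split on it to change the character value) the ratio is $2(n-3)!$; here the extra factor $2$ compared to the $S_n$ formula comes from $|H\cap A_n|$ or equivalently from the halved... no: it comes from $\chi^\lam(1)$ being unchanged while $g$ still lies in $H$, together with the fact that $|H|$ counted inside $A_n$ equals $(n-1)!$ still, giving $\om = 2\cdot\tfrac{(n-1)!}{\binom{n-1}{2}}\cdot\tfrac12$? — in any case the bookkeeping of $|H|$, the degree, and the $\pm$ splitting is the crux, and a careful comparison of the split-character ratio against $2(n-3)!$ finishes the proof. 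Throughout, the hypothesis $n>4$ ensures the hooks $(n-2,1,1)$ and $(2,1^{n-2})$ are genuinely present and distinct from the self-conjugate one.
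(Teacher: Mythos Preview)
Your framework is the same as the paper's: the eigenvalues are the central character values $\om_\lam(H)=|H|\,\chi^\lam(g)/\chi^\lam(1)$, and by the Murnaghan--Nakayama rule (the paper's Lemma~\ref{jj1}) only hooks $\lam=[n-j,1^j]$ contribute, with $\om_\lam=(-1)^j\,j!\,(n-1-j)!$. For even $n$ your analysis is right in outline but contains a slip: the value $(n-2)!$ is attained at $j=n-2$ (the hook $[2,1^{n-2}]$), not at $j=2$; the latter gives only $2(n-3)!$. Since $n$ is even, $n-2$ is even and $(-1)^{n-2}=+1$, so $(n-2)!$ is indeed the second largest eigenvalue, realised at $[2,1^{n-2}]$ exactly as in the paper's Lemma~\ref{jn1}(2).

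For odd $n$ you propose to work directly with $\Irr(A_n)$ and wrestle with the split characters at the self-conjugate hook. That can be pushed through, but your discussion shows you have not actually carried out the bookkeeping (and the muddle over an ``extra factor of $2$'' is a symptom --- there is none: $2(n-3)!$ is simply $2!\,(n-3)!$, the value of $j!(n-1-j)!$ at $j=2$). The paper sidesteps all of this. Since $A_n$ is the subgroup of $S_n$ generated by $H$, the formally-defined graph $\Gamma(S_n,H)$ is a disjoint union of two copies of $\Cay(A_n,H)$, so the two have the same \emph{set} of eigenvalues (Lemma~\ref{an5}). Hence the eigenvalues of $\Cay(A_n,H)$ are still exactly the numbers $(-1)^j\,j!\,(n-1-j)!$, computed entirely with $S_n$-characters. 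For odd $n$ both $j=0$ and $j=n-1$ give $+(n-1)!$ (trivial and sign, which restrict to the same $A_n$-character, so $\lam_1$ still has multiplicity $1$ in $\Cay(A_n,H)$), $j=1$ and $j=n-2$ both give $-(n-2)!$, and the second largest positive value is $2(n-3)!$ at $j=2$ (equivalently $j=n-3$). No split-character formula is needed.
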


\begin{theo}\label{1B} Let $\gG=\Cay(G,H)$ with $H=C(n,n-1)$ for $n>4$ and $G=S_{n}$  when $n$ is odd    and $G=A_{n}$ when $n$ is even. Then $\lambda_{1}(\gG)=n(n-2)!$. Furthermore, $\lambda_{2}(\gG)=3(n-3)(n-5)!$ when $n$ is even and $\lambda_{2}(\gG)=2(n-2)(n-4)!$ when $n$ is odd.
\end{theo}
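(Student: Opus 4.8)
The plan is to use the standard fact that for a normal connecting set $H$ (and $C(n,n-1)$ is a union of conjugacy classes, hence normal), the eigenvalues of $\Cay(G,H)$ are indexed by the irreducible characters $\chi$ of $G$, with the eigenvalue attached to $\chi$ being $\eta_\chi = \frac{1}{\chi(1)}\sum_{h\in H}\chi(h)$, occurring with multiplicity $\chi(1)^2$. Thus $\lambda_1(\gG)=|H|=n\cdot(n-2)!$ corresponds to the trivial character, and computing $\lambda_2(\gG)$ amounts to determining the second-largest value among the $\eta_\chi$. Since $H$ is a single class of $(n-1)$-cycles in $S_n$ restricted appropriately (note $(n-1)$-cycles are even when $n$ is odd, odd when $n$ is even, which explains the choice $G=S_n$ for $n$ odd and $G=A_n$ for $n$ even), the first step is to write $\eta_\chi$ explicitly via the Frobenius formula or the Murnaghan--Nakayama rule for the value of $\chi^\lambda$ on an $(n-1)$-cycle.

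**Reducing to a hook-length computation.** The key simplification is that the value of $\chi^\lambda$ on a permutation with cycle type $(n-1,1)$ is governed by the Murnaghan--Nakayama rule applied twice: first remove a border strip of size $n-1$, then the remaining cell. A partition $\lambda\vdash n$ has a border strip of size $n-1$ precisely when $\lambda$ or its conjugate is a hook, or more generally when removing $n-1$ boxes leaves a single box; the partitions that contribute are exactly hooks $(n-j,1^{j-1})$ and near-hooks $(2,1^{n-2})$-type shapes together with $(n-1,1)$ and $(2,2,1^{n-4})$ and a few others. So I would enumerate the $\lambda$ with $\chi^\lambda\big((n-1,1)\big)\ne 0$, compute each $\eta_{\chi^\lambda} = \chi^\lambda\big((n-1,1)\big)\cdot |C(n,n-1)| / \chi^\lambda(1)$ using $|C(n,n-1)| = n!/(n-1) = n\cdot(n-2)!$, and then compare magnitudes. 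For $S_n$ one must also pair each $\lambda$ with its conjugate (the sign twist), and for $A_n$ one must handle the splitting of self-conjugate characters; since an $(n-1)$-cycle with $n$ even is an odd-length... — rather, one checks that the relevant class does not split in $A_n$ in the cases at hand, so the $A_n$-eigenvalues are just the restrictions of the $S_n$-ones.

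**Identifying the second-largest eigenvalue.** After the enumeration, the eigenvalue from the trivial character (and its sign-conjugate, the sign character, giving $\pm n(n-2)!$) is the extreme one, and the next candidates come from the standard representation $\lambda=(n-1,1)$, its conjugate $(2,1^{n-2})$, and the two-row/two-column partitions $(n-2,2)$, $(n-2,1,1)$ and their conjugates. I expect $\chi^{(n-1,1)}$ vanishes or is small on an $(n-1)$-cycle (the fixed point of the cycle forces a factor), and the winner will be one of $(n-2,2)$ or $(n-2,1,1)$ depending on the parity of $n$: tracking the hook-length formula $\chi^\lambda(1)$ together with the MN-sign should yield $\eta = 3(n-3)(n-5)!$ for $n$ even and $\eta = 2(n-2)(n-4)!$ for $n$ odd. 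The final step is the inequality verification: one must show every other $\eta_{\chi^\lambda}$ is strictly smaller in absolute value than the claimed value. This is the main obstacle, because it requires a uniform bound on $|\chi^\lambda((n-1,1))|/\chi^\lambda(1)$ over all remaining $\lambda$; I would handle it by splitting into the "few small/large $\lambda$" treated by hand and the bulk range, for which a crude estimate — e.g. bounding $|\chi^\lambda((n-1,1))|\le$ (number of border strips) and $\chi^\lambda(1)\ge$ a polynomial-in-$n$ lower bound for partitions away from hooks and near-rectangles — suffices to beat the target, which is of order $n^2\cdot(n-5)!\sim (n-3)!$.
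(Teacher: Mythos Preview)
Your overall framework matches the paper's exactly: use the eigenvalue formula $\eta_\chi=|H|\,\chi(h)/\chi(1)$ for a class-sum, evaluate $\chi^\lambda$ on an $(n-1)$-cycle via Murnaghan--Nakayama (equivalently via branching to $S_{n-1}$), and locate the second-largest ratio. The reduction to $A_n$ via the disconnected graph $\Gamma(S_n,H)$ is also what the paper does.

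The genuine gap is in your identification of the extremal partition. Your candidates $(n-1,1)$, $(2,1^{n-2})$, $(n-2,1,1)$ and its conjugate $(3,1^{n-3})$ are all hooks, and \emph{every} hook gives $\chi^\lambda(h)=0$ on an $(n-1)$-cycle: branching $[n-m,1^m]$ to $S_{n-1}$ produces the two hooks $[n-m-1,1^m]$ and $[n-m,1^{m-1}]$, whose leg lengths have opposite parity, so their values on an $(n-1)$-cycle cancel. The only partitions with $\chi^\lambda(h)\neq 0$ are the near-hooks $\mu_m=[n-m,2,1^{m-2}]$ for $2\le m\le n-2$, and one computes $\chi_{\mu_m}(h)=(-1)^{m-1}$. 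Thus your remaining candidate $(n-2,2)=\mu_2$ gives $\chi(h)=-1$, a negative eigenvalue; its conjugate $(2,2,1^{n-4})=\mu_{n-2}$ gives $(-1)^{n-3}$, which is $+1$ only when $n$ is odd. For $n$ odd that is indeed the winner, but for $n$ even none of your listed partitions yields a positive value at all. The actual winner for $n$ even is $\mu_3=[n-3,2,1]$, which is not on your list; you need $m$ odd, and then minimise $\chi_{\mu_m}(1)$ over odd $m$, using that $\chi_{\mu_m}(1)$ is unimodal in $m$ with $\chi_{\mu_m}(1)=\chi_{\mu_{n-m}}(1)$.

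Once the correct family $\{\mu_m\}$ is isolated, no ``bulk estimate'' is needed: the non-vanishing characters all satisfy $|\chi_{\mu_m}(h)|=1$, so the comparison is purely a dimension inequality among the $\chi_{\mu_m}(1)$, which is an elementary monotonicity check.
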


An essential feature of the Cayley graphs in these two theorems is  that the connecting set $H$ is invariant under conjugation by $G.$ In fact, $C(n,k)$ is a conjugacy class of $S_n$. If $k=n$ or $n-1$,
the above results are deduced from some analysis of the group characters in a relatively straightforward way. For arbitrary $k$ computing  the second largest eigenvalue for $\Cay(G,C(n,k))$ is an open problem.
For $k=3$ this is solved  in \cite[Theorem 3.4]{HH}, for $k=2$ see \cite[Lemma 3]{KY}.

 Next let $1\leq r<k<n$ and let $C(n,k;r) \subseteq C(n,k)$ be set of all $k$-cycles in $ \Sym(1\ld n)$ which move all the points in  the set $\{1,2\ld r\}.$ That is to say, $g=(i_{1},i_{2}\ld i_{k})(i_{k+1})\dots(i_{n})\in C(n,k;r)$ if and only if $\{1,2\ld r\}\subset \{i_{1},i_{2}\ld i_{k}\}.$ Clearly, $|C(n,k;r)|=(k-1)!{n-r\choose k-r}.$

 Our main results are Theorems \ref{th13-} and \ref{th13}. As $\lambda_{1}(\gG)=|H|$ in general (whenever $H$ is a connecting set), we focus on $\lambda_{2}(\gG)$. 

\begin{theo}\label{th13-}   Let $n>4$ be an integer and $1\leq r<k<n$.  Let $\gG=\Cay(G,H)$ with $H=C(n,k;r)$, where $G=S_{n}$ when $k$ is even and $G=A_{n}$ when $k$ is odd. Then 
$\mu_2=(k-2)! {n-r \choose k-r}\frac{1}{n-r} \big((k-1)(n-k) - \frac{(k-r-1)(k-r)}{n-r-1}\big)$ is an \ei of $\gG$
and hence $\lam_2\geq \mu_2$.\end{theo}

We conjecture that $\lam_2=\mu_2$ for any $r,k$ with $1\leq r<k<n$. We prove this conjecture  for $k=r+1$ for 
arbitrary $r$ with $1\leq r<n-1$:

\begin{theo}\label{th13}   Let $n>4$ be an integer and $2\leq r\leq n-2$.  Let $\gG=\Cay(G,H)$ with $H=C(n,r+1;r)$ where $G=S_{n}$ when $r$ is odd  and $G=A_{n}$ when $r$ is even. Then $\lambda_{1}(\gG)=r!(n-r)$ and $\lambda_{2}(\gG)=r!(n-r-1)$.
\end{theo}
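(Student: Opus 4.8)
The plan is to compute the spectrum of $\gG=\Cay(G,H)$ with $H=C(n,r+1;r)$ by means of the representation theory of $S_n$ (and, for even $r$, of $A_n$). Since $H$ is \emph{not} a union of conjugacy classes, we cannot simply read off eigenvalues from character values on $H$; instead we realise $A$, the adjacency matrix of $\gG$, as the element $\sum_{h\in H}h$ acting in the right regular representation $\CC G$, and decompose $\CC G=\bigoplus_{\lambda}(\dim V_\lambda)\,V_\lambda$. On each irreducible constituent $V_\lambda$ the operator $T_\lambda:=\sum_{h\in H}\rho_\lambda(h)$ acts, and the multiset of eigenvalues of $\gG$ is the union over $\lambda$, with multiplicity $\dim V_\lambda$, of the eigenvalues of $T_\lambda$. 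The key structural observation is that $H=C(n,r+1;r)$ is stable under conjugation by the subgroup $K:=\Sym(r+1\ld n)\cong S_{n-r}$ (it fixes $\{1\ld r\}$ setwise, indeed pointwise, and permutes the possible "$(r+1)$-st point" among $\{r+1\ld n\}$), and also under conjugation by $\Sym(1\ld r)\cong S_r$. Hence $T_\lambda$ commutes with $\rho_\lambda(S_r\times S_{n-r})$, so by Schur's lemma it acts as a scalar on each isotypic component of $V_\lambda|_{S_r\times S_{n-r}}$; the branching rule (Littlewood--Richardson / Pieri) then controls these components.

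First I would pin down $\lambda_1(\gG)=|H|=(r+1-1)!\binom{n-r}{(r+1)-r}=r!\,(n-r)$, which is the eigenvalue on the trivial constituent, and confirm connectivity so that $\lambda_1$ has multiplicity one (equivalently, that $H$ generates $G$ and $\gG$ is non-bipartite in the relevant parity, so the Perron eigenvalue is simple and is not matched by $-\lambda_1$). Next, the main computation: identify the value of the scalar by which $T_\lambda$ acts on each $S_r\times S_{n-r}$-isotypic piece. A clean way is to write each $h\in H$ as $h=(i_1\ld i_{r+1})$ with $\{1\ld r\}\subseteq\{i_1\ld i_{r+1}\}$ and decompose the sum according to which element $j\in\{r+1\ld n\}$ is the extra point: $H=\bigsqcup_{j=r+1}^{n}H_j$ where $H_j$ consists of the $(r+1)$-cycles on $\{1\ld r,j\}$. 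Each $H_j$ is a full conjugacy class of $\Sym(\{1\ld r,j\})\cong S_{r+1}$, so $\sum_{h\in H_j}h$ is a \emph{central} element of $\CC S_{r+1}$ and acts on any irreducible of $S_{r+1}$ by the known scalar $r!\cdot\chi_\mu(\text{$(r{+}1)$-cycle})/\chi_\mu(1)$. Summing the $n-r$ "parallel" copies and using the branching $S_{r+1}\uparrow$ inside $V_\lambda$, the eigenvalues of $T_\lambda$ become explicit in terms of the contents/hook data of $\lambda$ and of the subdiagrams obtained by removing one box. The character of an $(r+1)$-cycle in $S_{r+1}$ is nonzero only on hook shapes $(r+1-s,1^{s})$, where it equals $(-1)^{s}$; this is what forces the spectrum to be sparse and makes $\lambda_2$ computable.

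Carrying this out, I expect the eigenvalues of $\gG$ other than $\lambda_1=r!(n-r)$ to be small, and the second largest to be $r!(n-r-1)$, attained on a specific near-trivial constituent — the partition $\lambda=(n-1,1)$ (the standard representation) on the $S_r\times S_{n-r}$-piece of "type $((r),(n-r-1,1))$ or $((r-1,1),(n-r))$", one of which gives exactly $r!\big((n-r)-1\big)$. So the final steps are: (i) show $T_\lambda=r!(n-r-1)\cdot\Id$ on that one isotypic component of $V_{(n-1,1)}$, giving the lower bound $\lambda_2\ge r!(n-r-1)$; and (ii) show every other eigenvalue on every other constituent (and the other pieces of $V_{(n-1,1)}$) is $\le r!(n-r-1)$ in absolute value. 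Step (ii) is the main obstacle: it requires a uniform upper estimate $\|T_\lambda\|\le r!(n-r-1)$ for all $\lambda\ne(n),(n-1,1)$ (and the relevant parity restriction to $A_n$ when $r$ is even, where one must handle the split of $V_{(n/2,n/2)}$-type or self-conjugate diagrams). I would obtain this by combining the triangle inequality $\|T_\lambda\|\le\sum_{j}\|\sum_{h\in H_j}h\|$ with the exact central scalars on each $S_{r+1}$-block from the hook-character formula above, plus a counting/convexity argument showing that for $\lambda$ of "depth $\ge 2$" the per-$j$ contributions cannot all align in sign and size; the borderline cases $\lambda=(n-1,1)$ and its conjugate $(2,1^{n-2})$ (coinciding with $(n-1,1)$ up to the sign character on $S_n$, hence relevant for the $A_n$ statement) get checked directly. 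I would also need the standard fact that passing from $S_n$ to $A_n$ either leaves an irreducible irreducible or splits it into two halves on which $T_\lambda$ still acts with the same or conjugate eigenvalues, so the $A_n$ bound follows from the $S_n$ computation together with the observation that the "bad" sign-twisted constituent that could exceed the bound on $S_n$ is precisely the one removed by restricting $H\subseteq A_n$.
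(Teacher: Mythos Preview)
Your framework is sound and matches the paper's setup: realise $A$ as $H^+$ in the regular representation, use invariance of $H$ under $S_r\times S_{n-r}$ so that $H^+$ acts scalarly on each isotypic piece, and split the analysis into a lower bound (exhibit the eigenvalue $r!(n-r-1)$ on $V_{(n-1,1)}$) and an upper bound (all other eigenvalues are no larger). Your lower-bound plan is correct; the paper does this via two explicit equitable partitions rather than by computing the isotypic scalars directly, but your route would work equally well.

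The genuine gap is in step~(ii). The triangle inequality applied to your decomposition $H=\bigsqcup_{j=r+1}^{n}H_j$ gives only $\|T_\lambda\|\le (n-r)\cdot\max_j\|H_j^+\|$, and for any $\lambda$ such that $V_\lambda|_{S_{r+1}}$ contains the trivial (or sign) representation---which happens for essentially all $\lambda$ once $n$ is moderately larger than $r$---the right-hand side is $(n-r)\cdot r!=|H|$, i.e.\ no improvement. Your ``counting/convexity argument showing the per-$j$ contributions cannot all align'' is not an argument: the operators $H_j^+$ for different $j$ do not share eigenspaces (they are central in \emph{different} copies of $\CC S_{r+1}$), so there is no common diagonalisation on which to compare signs, and you give no mechanism for producing the required cancellation.

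The paper's key idea, which your proposal is missing, is to use your decomposition \emph{inductively} rather than all at once. Write $H=H_{n-1}\cup E$ where $H_{n-1}=H\cap\Sym(1,\ldots,n-1)=C(n-1,r+1;r)$ and $E=H_n$ (your notation), so $|E|=r!$. For an irreducible $S_n$-module $L$ with $\dim L>n-1$, the restriction $L|_{S_{n-1}}$ has no one-dimensional constituent, so by induction on $n-r$ every eigenvalue of $H_{n-1}^+$ on $L$ is at most $r!\,(n-r-2)$; the eigenvalues of $E^+$ are trivially at most $|E|=r!$. Since $H^+$, $H_{n-1}^+$, $E^+$ can be taken symmetric in an orthogonal basis of $L$, Weyl's inequality gives $\lambda\le r!(n-r-2)+r!=r!(n-r-1)$. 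The base case $r=n-2$ is handled separately: there $H_{n-1}$ and $E$ are each full $(n-1)$-cycle classes in a copy of $S_{n-1}$, and one uses the exact hook-character bounds for $(n-1)$-cycles (your Lemma~\ref{jj1}) together with Weyl again. This inductive peeling, not a global alignment estimate, is what closes the argument.
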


In Theorems 1.3 and 1.4, in contrast to Theorems 1.1 and 1.3,  the connecting set $H$ is not invariant under $S_n.$ However, we make essential use of the fact that $H$ is invariant under  a subgroup isomorphic to $S_r  \times S_{n-r}$. Other cases where the bound is sharp include $\Gamma=\Cay(G,H)$ with $H=C(n,3;1)$ where $\lambda_{2}(\Gamma)=n^{2}-5n+5$ by \cite[Theorems 3.2]{HH}. The cases
$H=C(n,3;2)$ with $\lambda_{2}(\Gamma)=2n-6$ and $H=C(n,2;1)$ with $\lambda_{2}(\Gamma)=n-2$ were resolved earlier in   \cite[Theorem 2.3]{HH} and \cite[Section 8.1]{LZ} respectively.

\med
{\it Notation:} \,All groups considered here are finite. Further, all modules and \reps are over the field $\CC$ of complex numbers. By $\cp G$ we denote the group algebra of the group $G$ over $\CC$. If $H\subset G$ then $|H|$ is the number of elements of $H$ and
$H^+$ is the sum of all $h\in H,$ as an element of $\cp G$. A sentence such as\, `$L$ is a $G$-module' \,means that  $L$ is a $\cp G$-module.
If $L$ is a $\cp G$-module and $X$ is a subgroup of $G$, we write $L|_X$ for the restriction of $L$ to $X$. The  trivial  $G$-module and its character are denoted by $1_G$. We assume that the reader is familiar with general notions and elementary facts of finite group theory, including the \rep theory of $S_n$ and $A_n$.

\section{Preliminaries}

We collect several well-known facts and prerequisites for the paper. All graphs are finite, undirected and without multiple edges.

\subsection{\sc Graphs and Eigenvalues} Let $\gG$ be a graph on the vertex set $V;$
we put $m=|V|.$ For $u,v\in V$ we write $u\sim v$ if $u$ is adjacent to $v.$ For $v\in V$ the set $N(v)=\{u\in V\,:\,u\sim v\}$ denotes the set of {\it neighbours} of $v$ and $d(v)=|N(v)|$ is the {\it degree} of $v.$ The graph is {\it regular} of degree $d$ if $d(v)=d$ for all $v\in V.$ A permutation $\g$ of $V,$ $u\mapsto u^{\g},$ is an {\it automorphism} of $\Ga$ if $(N(v))^{\g}=N(v^{\g})$ for all $v\in V.$  The group of all automorphisms of $\gG$ is denoted by $\Aut(\gG).$

The {\it adjacency matrix} $A=(a_{u,v})$ of $\gG$ is given by $a_{u,v}=1$ if $u\sim v$ and $a_{u,v}=0$ otherwise. Then $A$  determines $\gG$. The  {\it eigenvalues} of $\gG$ are, by definition, the \eis of $A$.  These are real since $A$ is symmetric.

\begin{theo}\label{wk1}  {\rm \cite[Proposition 1.48]{kr}} Let $\gG$ be a regular graph of degree $d$ and let $A$ be its adjacency matrix.  Then \\[5pt]
$(1)$\, $d$ is an eigenvalue of $A.$ Furthermore,  $d$ has \mult $1$ if and only if $\gG$ is connected;  \\[5pt]
$(2)$\, $\gG$ is bipartite if and only if $-d$ is an eigenvalue of $A;$ \\[5pt]
$(3)$\, $d\geq |\lambda|$ for all eigenvalues $\lambda$ of $A.$ If $\gG$ is bipartite and if $\lambda$ is an eigenvalue of $A$ then $-\lambda$ is an eigenvalue of $A$ with the same multiplicity as $\lambda.$
\end{theo}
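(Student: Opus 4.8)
The plan is to prove all three parts by the standard extremal-vertex (maximum modulus) technique, supplemented by a similarity argument for the bipartite case; no deep input is needed, since $A$ is a real symmetric $0/1$ matrix whose rows all sum to $d$. For part $(1)$, the starting observation is that the all-ones vector $\mathbf 1$ satisfies $A\mathbf 1=d\,\mathbf 1$, because each row of $A$ contains exactly $d$ ones; hence $d$ is an eigenvalue. To pin down its multiplicity I would take any real eigenvector $f$ for $d$ and choose a vertex $v$ at which $|f(v)|$ is maximal. From $\sum_{u\sim v}f(u)=(Af)(v)=d\,f(v)$ together with $|f(u)|\le|f(v)|$ for the $d$ neighbours $u$ of $v$, the triangle inequality must hold with equality, forcing $f(u)=f(v)$ on every neighbour of $v$. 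Propagating this along edges shows $f$ is constant on each connected component, so the $d$-eigenspace is spanned by the indicator vectors of the components; its dimension equals the number of components, and it is $1$ precisely when $\gG$ is connected.

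For part $(2)$, the forward direction is explicit: if $\gG$ is bipartite with colour classes $X,Y$, then $f=\mathbf 1_X-\mathbf 1_Y$ satisfies $Af=-d\,f$, since every neighbour of an $X$-vertex lies in $Y$ and conversely. For the converse I would rerun the extremal-vertex argument on an eigenvector $f$ for $-d$: at a vertex $v$ with $|f(v)|$ maximal, the identity $\sum_{u\sim v}f(u)=-d\,f(v)$ forces $f(u)=-f(v)$ on all neighbours of $v$, and iterating produces a proper $2$-colouring of the component of $v$ according to the sign of $f$. Reducing to connected components, exactly as in part $(1)$, then yields the equivalence.

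For part $(3)$, the bound $|\lambda|\le d$ is once more the extremal-vertex estimate: for a real eigenvector $f$ with $Af=\lambda f$ and $|f(v)|$ maximal one has $|\lambda|\,|f(v)|=\bigl|\sum_{u\sim v}f(u)\bigr|\le\sum_{u\sim v}|f(u)|\le d\,|f(v)|$, and $f(v)\neq0$ gives $|\lambda|\le d$. For the spectral symmetry in the bipartite case I would order the vertices as $X$ followed by $Y$, so that $A=\begin{pmatrix}0&B\\ B^{\top}&0\end{pmatrix}$, and set $D=\diag(I_X,-I_Y)$. A direct block computation gives $DAD=-A$ with $D=D\up$, so $A$ and $-A$ are similar and thus share their spectrum with multiplicities; equivalently $f\mapsto Df$ is a linear isomorphism from the $\lambda$-eigenspace onto the $(-\lambda)$-eigenspace. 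Hence $\lambda$ and $-\lambda$ are eigenvalues of equal multiplicity.

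There is no serious obstacle here; the one point requiring care is the bookkeeping for disconnected graphs in parts $(1)$ and $(2)$, where one must reduce to the connected case and observe that, precisely, $-d$ is an eigenvalue exactly when at least one component is bipartite (which coincides with bipartiteness of $\gG$ when $\gG$ is connected, the only case relevant to the Cayley graphs of this paper). All three arguments hinge on the same device, testing the eigenvalue equation at a vertex where $|f|$ is maximal, which is why I would isolate that observation once and reuse it throughout.
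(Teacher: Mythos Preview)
The paper does not supply its own proof of this theorem: it is stated with a citation to \cite[Proposition 1.48]{kr} and used as a black box thereafter. Your proposed proof is the standard one and is correct; the extremal-vertex argument and the block-matrix similarity $DAD=-A$ are exactly the usual route to these facts. Your observation about disconnected graphs in part~$(2)$ is also well taken: as stated, the equivalence can fail if $\gG$ has both a bipartite and a non-bipartite component (e.g.\ the disjoint union of $K_{3,3}$ and $K_4$ is $3$-regular with eigenvalue $-3$ but is not bipartite), and the clean statement holds only in the connected case, which is all that is used in the paper.
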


\subsection{\sc Equitable partitions}  Let $\Gamma$ be a graph with vertex set $V.$ Let $P$ be a partition   $V=V_1\cup \cdots\cup V_k$ of $V$. Then $P$ is called {\it equitable} if $ |N(v_i)\cap V_j|$ does not depend on the choice of $v_i\in V_i$ for all $1\leq i,\,j\leq k.$  If $P$ is equitable then the matrix $B=B(P)=( |V_i \cap N(v_j)|) $ with $1\leq i,j\leq k$ is called the {\it associated matrix} of $P.$ As usual, a second partition $P'$  is a {\it refinement of} $P$ if every class of $P$ is some union of classes of $P'.$

For any graph $\gG$ examples of equitable partitions arise from the orbits of a group of automorphisms of $\gG,$ see Lemma~\ref{AA23} below.  For this reason equitable partitions are also called {\it generalized orbits}. For incidence graphs of geometries  also the term {\it tactical decomposition} is used, see Dembowski's book~\cite{Dem}.

\med Let  $\cp V$ be the vector space with basis $V$ over $\cp.$ Then $A$ can be viewed as a linear transformation of $\cp V$, and then $Av=\sum_ {x\in N(v)}x$ for every $v\in V$.
Let $(\cdot,\cdot)$ be the standard bilinear form on $\cp V$ defined by $(u,v)=\delta_{u,v}$ (the Kronecker delta) for $u,v\in V$.  Let $V=V_{1}\cup \dots \cup V_{k}$ be a partition $P$ of  $ V$.
Set $w_i=\sum _{u\in V_i}u$ for $i=1\ld k$. Then $(w_i,Av)=|N(v)\cap V_i|$ (as $(w_i,u)=1$ for $u\in V$
\ii $u\in V_i$). Let $\cp P$ be the vector space with basis $w_1\ld w_k$. In this notation we have:

\bl{AA22}   Let $\Gamma$ be a graph with vertex set V and adjacency matrix $A$, and let $P$ be a partition $V=V_{1}\cup \dots \cup V_{k}$ of $V.$ Then $P$ is equitable if and only if $ \cp P$ is $A$-invariant (that is, $Ax\in \cp P$ for all $\x\in \cp P).$ Furthermore,  if $P$ is equitable then $B(P)$ is the matrix of the restriction of $A$ to
 $\cp P$ for the basis $w_{1},\dots, w_{k}.$ In particular, all eigenvalues of $B$ are \eis of $\Ga.$ If $\Ga$ is regular of degree $d$ then $d$ is an eigenvalue of $B.$

 In addition, if $P'$ is an equitable refinement of $P$ then the \eis of $B(P)$ are \eis of $B(P')$.\el

\pf For $i=1\ld k$ we have
\begin{equation}\label{a1n}Aw_i=\sum _{v\in V}(Aw_i,v)v=\sum _{v\in V}(w_i,Av)v=\sum _{v\in V}(w_i,\sum_{u\in N(v)}u)v=\sum _{v\in V}|N(v)\cap V_i|v.\end{equation}
To be in $\cp P$ the right hand side must be constant on every $V_j$, that is, $|N(v)\cap V_i|=|N(v')\cap V_i|$
for every $j\in \{1\ld k\}$ and every $v,v'\in V_j$. The converse and the second assertion of the lemma follows from (\ref{a1n}) as well. Finally, if $\Gamma$ is regular of degree $d$ then $A(w_1+\cdots + w_k)=A\sum_{v\in V}v=d\sum_{v\in V}v=d(w_1+\cdots + w_k)$, so $d$ is an \ei of $A$ on $\cp P$.

Let $V=V'_{1}\cup \dots \cup V'_{l}$ be a refinement $P'$ of $P$ and let as above $w_1'\ld w_l'$ be a basis of $\cp P'.$ Then every $w_j$ ($j=1\ld k$) is a linear combination of $w_1'\ld w_{l}'$, so
$\cp P\subset \cp P'$, whence the additional statement.
\dne

\bl{AA23}  Let $\Gamma$ be a graph and $G$ a group of automorphisms of $\Gamma.$ Let $V_1,\,...,\,V_k$ be the orbits of $G$ on the vertex set $V$ of $\Gamma.$  Then $V=V_1\cup \cdots\cup V_k$ is equitable.
\el

\bp For $i, j$ with $1\leq i, j\leq k$ let $v,\, v'\in V_{i}.$ Then there exists $g\in G$ so that $g(v)=v'$ and therefore $|N(v)\cap V_{j}|=|g(N(v)\cap V_{j})|=|N(g(v))\cap g(V_{j}))|=|N(v')\cap V_{j}|.$ \enp

\subsection{\sc Eigenvalue Inequalities} We denote the distinct eigenvalues of a graph    $\Gamma$ by  $$\gl_{1}>\gl_{2}>\dots>\gl_{m'}$$ for some $m'\leq m.$ If $\Ga$ is regular of degree $d$ then $\gl_{1}=d$ by Theorem~\ref{wk1}. The second largest eigenvalue, $\gl_{2},$ plays a distinguished role in graph theory as it provides bounds for the isoperimetric constant of $\Ga,$ see also graph expanders, the Kahzdan constant and Cheeger inequality in \cite{kr}. From Theorem~\ref{wk1} and Lemma~\ref{AA22} we immediately have

\bl{AA231} Let $P$ be an equitable partition of the regular graph $\Gamma$ and let $\beta_{2}$ be the second largest eigenvalue of $B(P).$ Then $\gl_{2}\geq \beta_{2}.$ \el

Upper and lower bounds on eigenvalues of a symmetric matrix over the reals can be obtained from a theorem of Hermann Weyl~\cite{Weyl}, see also \cite[Theorem 2.8.1]{BH}.

\begin{theo}[Weyl Inequality]\label{AA232} Let $C=A+B$ be symmetric $(m\times m)$-matrices, and let $\gamma_1\geq \cdots \geq \gamma_m,$
$\al_1\geq \cdots \geq \al_m,$ $\be_1\geq \cdots \geq \be_m$ be the \eis of $C,B,A,$ respectively. Then for $1\leq i,j\leq m$ we have \begin{equation}\label{eH1}
\gamma_{i+j-1}\leq \al_i+\be_j\  {\rm whenever}\,\, i+j-1\leq m
\end{equation} and \begin{equation}\label{eH1b}
\gamma_{i+j-m}\geq \al_i+\be_j\  {\rm whenever} \,\,  i+j-1\geq m.
\end{equation}
\end{theo}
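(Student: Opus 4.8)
The plan is to prove the upper bounds (\ref{eH1}) by the classical argument combining the Rayleigh--quotient (Courant--Fischer) control of eigenvalues on spans of eigenvectors with a dimension count, and then to deduce the lower bounds (\ref{eH1b}) by applying (\ref{eH1}) to the negated matrices $-A,-B,-C$.

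First I would fix orthonormal eigenbases. Since $A,B,C$ are symmetric, choose orthonormal bases of eigenvectors $u_1\ld u_m$ for $A$ with $Au_s=\be_s u_s$, then $v_1\ld v_m$ for $B$ with $Bv_s=\al_s v_s$, and $w_1\ld w_m$ for $C$ with $Cw_s=\gamma_s w_s$. The only spectral input needed is that the Rayleigh quotient is bounded on spans of eigenvectors: writing $(\cdot,\cdot)$ for the standard inner product, on the span $\lan u_j\ld u_m\ran$ one has $(Ax,x)\le \be_j(x,x)$, on $\lan v_i\ld v_m\ran$ one has $(Bx,x)\le \al_i(x,x)$, and on $\lan w_1\ld w_{i+j-1}\ran$ one has $(Cx,x)\ge \gamma_{i+j-1}(x,x)$.

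The key step is a dimension count. Set $W=\lan w_1\ld w_{i+j-1}\ran$, $U=\lan u_j\ld u_m\ran$ and $V=\lan v_i\ld v_m\ran$, of dimensions $i+j-1$, $m-j+1$ and $m-i+1$. Applying the two-subspace bound $\dim(X\cap Y)\ge \dim X+\dim Y-m$ twice, in the $m$-dimensional ambient space one gets $\dim(W\cap U\cap V)\ge \dim W+\dim U+\dim V-2m=1$, so there is a nonzero $x\in W\cap U\cap V$. For this $x$,
\begin{equation*}
\gamma_{i+j-1}(x,x)\le (Cx,x)=(Ax,x)+(Bx,x)\le(\al_i+\be_j)(x,x),
\end{equation*}
and dividing by $(x,x)>0$ gives (\ref{eH1}). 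I expect the only delicate points to be this triple-intersection dimension inequality and keeping straight which spans give upper and which give lower Rayleigh bounds; everything else is bookkeeping.

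Finally, for (\ref{eH1b}) I would replace $A,B,C$ by $-A,-B,-C$, which again satisfy $-C=(-A)+(-B)$; the $s$-th largest eigenvalue of $-A$ is $-\be_{m-s+1}$, and similarly for $-B$ and $-C$. Substituting these into the already-established (\ref{eH1}) and reindexing via $i\mapsto m-i+1$, $j\mapsto m-j+1$ turns the constraint $i+j-1\le m$ into $i+j-1\ge m$ and yields exactly $\gamma_{i+j-m}\ge \al_i+\be_j$. Alternatively, the identical dimension count carried out with the complementary spans $\lan w_{i+j-m}\ld w_m\ran$, $\lan u_1\ld u_j\ran$ and $\lan v_1\ld v_i\ran$ (whose dimensions again sum to $2m+1$) produces (\ref{eH1b}) directly, the inequalities now running the other way.
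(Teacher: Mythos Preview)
Your argument is correct and is precisely the classical Courant--Fischer/Rayleigh-quotient proof of Weyl's inequalities; the dimension count and the negation trick for the lower bounds are both handled properly. However, the paper does not actually give a proof of this theorem at all: it is stated as a known result with references to Weyl~\cite{Weyl} and \cite[Theorem~2.8.1]{BH}, so there is no in-paper proof to compare against. Your write-up is a fine self-contained proof should one be wanted.
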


We note in particular, for $i,\,j\in\{1,2\}$ we have
\begin{equation}\label{eH2} \gamma_{2}\leq \al_2+\be_1\,\,\text{and}\,\,\,\gamma_{2}\leq \al_1+\be_2.
\end{equation} We refer to (\ref{eH2}) as the Weyl inequality.

\med
For adjacency matrices we have several applications. First let $\g_{1}\geq \g_{2}\geq\g_{3}\geq \dots \geq \g_{m}$ be {\it all} eigenvalues of $\gG,$ with repetitions.  If $\gG$ is regular of degree $d$ and connected then we  have $d=\g_{1}=\gl_{1}>\gl_{2}=\g_{2}$ by Theorem~\ref{wk1}, for other eigenvalues repetitions may appear.

Let $v$ be a vertex of $\gG$ and let $\Gamma'$ be obtained from $\Gamma$ by deleting all edges incident with $v.$ Denote the adjacency matrix of $\Gamma'$ by $A'$ and let $\g_{1}'\geq \g_{2}'\geq \dots \geq \g_{m}'$ be {\it all} eigenvalues of $\gG',$ with repetitions. Next let $\Sigma$ be the `star' at $v$ with $d$ rays, where $d$ is the degree of $v,$ and with $m-d-1$ isolated vertices. Denote its adjacency matrix by $S.$ The eigenvalues of the star are $\sigma_{1}=\sqrt{d}>\sigma_{2}=0=\dots=\sigma_{m-1}>\sigma_{m}=-\sqrt{d},$ by a simple computation. Then $A=A'+S$ and from Theorem~\ref{AA232} we obtain the following

\begin{corol}\label{AA233} Let $v$ be a vertex of the connected  graph $\gG$ and let $\Gamma'$ be obtained by deleting all edges incident with $v.$ Let $\gl_{1}>\gl_{2}$ and $\gl_{1}'>\gl_{2}'$ be the largest and second largest eigenvalues of $\gG$ and $\gG',$ respectively.  Then \begin{equation}\label{Inta}
\gl_{1}\geq \gl'_{1}\geq \gl_{2}\geq \gl'_{2}. \end{equation}
Furthermore, if $d$ is the degree of $v$ then \begin{equation}\label{Interb}
|\gl_{i}-\gl_{i}'|\leq \sqrt{d}
\end{equation} for $1\leq i\leq 2.$
\end{corol}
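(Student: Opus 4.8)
The plan is to build on the decomposition $A=A'+S$ already in place, where $A,A',S$ are the adjacency matrices of $\gG$, $\gG'$ and the star $\Sigma$, and on the spectrum $\sigma_{1}=\sqrt d\geq\sigma_{2}=\cdots=\sigma_{m-1}=0\geq\sigma_{m}=-\sqrt d$ of $S$. Write $\g_{1}\geq\cdots\geq\g_{m}$ and $\g_{1}'\geq\cdots\geq\g_{m}'$ for the eigenvalues with multiplicity of $A$ and $A'$. Since $\gG$ is connected its Perron root is simple, so $\gl_{1}=\g_{1}>\g_{2}=\gl_{2}$, while $\gl_{1}'=\g_{1}'$ and $\gl_{2}'=\g_{2}'$.

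For the quantitative bound~(\ref{Interb}) I would feed the two summands $A'$ and $S$ into Weyl's inequality (Theorem~\ref{AA232}): with $j=1$ in~(\ref{eH1}) this gives $\g_{i}\leq\g_{i}'+\sigma_{1}=\g_{i}'+\sqrt d$, and with $j=m$ in~(\ref{eH1b}) it gives $\g_{i}\geq\g_{i}'+\sigma_{m}=\g_{i}'-\sqrt d$; for $i=1,2$ these combine to $|\g_{i}-\g_{i}'|\leq\sqrt d$. The middle link $\gl_{1}'\geq\gl_{2}$ in~(\ref{Inta}) is also immediate from Weyl: taking $i=1$, $j=2$ in~(\ref{eH1}) yields $\g_{2}\leq\g_{1}'+\sigma_{2}=\g_{1}'$.

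The two outer links $\gl_{1}\geq\gl_{1}'$ and $\gl_{2}\geq\gl_{2}'$ in~(\ref{Inta}) I would obtain from eigenvalue interlacing rather than from Weyl. Since $\gG'$ consists of the induced subgraph $\gG\setminus v$ together with $v$ as an isolated vertex, $\Spec(A')=\Spec(A_{\gG\setminus v})\cup\{0\}$, and Cauchy interlacing for the principal submatrix $A_{\gG\setminus v}$ of $A$ gives $\g_{i}\geq\mu_{i}\geq\g_{i+1}$ for $1\leq i\leq m-1$, where $\mu_{1}\geq\cdots\geq\mu_{m-1}$ are the eigenvalues of $\gG\setminus v$. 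As $\mu_{1}\geq 0$ one reads off $\gl_{1}'=\mu_{1}\leq\g_{1}=\gl_{1}$ and $\gl_{2}'=\max(\mu_{2},0)$. (Alternatively $\gl_{1}\geq\gl_{1}'$ follows from $A-A'=S\geq 0$ entrywise by testing the Rayleigh quotient of $A$ on a nonnegative Perron vector of $A'$.)

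The hard part, I expect, is exactly the last link $\gl_{2}\geq\gl_{2}'$: because $\gG'$ is disconnected one must account for the spurious eigenvalue $0$ contributed by the isolated vertex $v$, and the inequality $\gl_{2}'=\max(\mu_{2},0)\leq\gl_{2}$ genuinely needs $\gl_{2}=\g_{2}\geq 0$, which holds for the vertex-transitive low-degree Cayley graphs of this paper since they carry independent sets of size $\geq 2$, but would fail for a complete graph. I would therefore make the relevant positivity explicit rather than argue as though $\gG'$ were connected; with that in hand the rest reduces to the two substitutions into Weyl's inequality plus Cauchy interlacing, the only real bookkeeping being the spectrum of $S$ and the extra zero eigenvalue of $\gG'$.
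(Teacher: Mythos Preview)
Your plan matches the paper's own argument: the remark following the corollary says explicitly that (\ref{Inta}) is an instance of Cauchy interlacing for the induced subgraph $\Gamma[V\setminus\{v\}]$ (after accounting for the extra eigenvalue $0$ coming from the isolated vertex), while (\ref{Interb}) comes from Weyl's inequality applied to $A=A'+S$. Your choices of indices in (\ref{eH1}) and (\ref{eH1b}) are exactly what is needed, and your observation that the middle link $\gl_{1}'\geq\gl_{2}$ also drops out of Weyl (via $i=1$, $j=2$, using $\sigma_{2}=0$) is correct.

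Where you go beyond the paper is in flagging the last link $\gl_{2}\geq\gl_{2}'$. You are right that this does \emph{not} follow from interlacing alone: interlacing gives $\gl_{2}\geq\mu_{2}$, but the isolated vertex contributes an eigenvalue $0$ to $\Gamma'$, and if $\gl_{2}<0$ one may have $\gl_{2}'=0>\gl_{2}$. The complete graph $K_{n}$ ($n\geq 3$) is a genuine counterexample: there $\gl_{2}=-1$ while $\gl_{2}'=0$. The paper passes over this silently, so the corollary as stated is not true in full generality without a hypothesis such as $\gl_{2}\geq 0$. Your proposed fix---an independent set of size $2$ forces $\g_{2}\geq 0$ by interlacing against the $2\times 2$ zero principal submatrix---is correct and covers all the Cayley graphs treated later (none of which is complete), so the gap is harmless for the applications in the paper, but it is a real omission in the general statement.
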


Accounting for the isolated vertex $v,$ the spectrum of the graph $\Gamma[V\setminus\{v\}]$ induced on $V\setminus\{v\}$ is obtained from the spectrum of $\gG'$ by removing one eigenvalue $\gl'=0.$ The first part of the corollary is therefore an instance of the well-known interlacing theorem, see~\cite[Theorem 2.5.1]{BH}. However, the second part of the corollary appears to be new.

\subsection{\sc Cayley Graphs} Let $G$ be a group with identity element $1$ or $1_G$ and let $H$ be a subset of $G.$ Then $H$ is a  {\it connecting set} if (i) $1\not\in H,$\, (ii)  $H=H^{-1}(=\{h^{-1}\,|\,h\in H\})$ and\, (iii) $H$ is not contained in any proper subgroup of $G.$ For such a set $H$ the {\it Cayley graph} $\Cay(G,H)$ is the graph with vertex set $V=G$ so that two vertices $u,\,v\in G$ are adjacent if and only if $uv^{-1}\in H.$ Thus $N(v)=Hv$ and so $\Cay(G,H)$ is regular of degree $|H|$. The  adjacency matrix of $\Cay(G,H)$ is denoted by $A=A(G,H).$

More generally, one can construct a similar graph $\Gamma(G,H)$ if (i),(ii) holds and (iii) fails.
Then $\Gamma(G,H)$ is still a regular graph of degree $|H|$. Let $G_1$ be the least subgroup of $G$ containing $H$. Then $H$ is a connecting set  for $G_1$, and $\Gamma(G,H)$ is a disconnected graph isomorphic to the union of $|G:G_1|$ copies of $\Cay(G_1,H). $ In particular, the \eis of $\Gamma(G,H)$ and $\Cay(G_1,H)$ are the same (disregarding the multiplicities).
In fact, $A(G,H)$ is similar to a block diagonal matrix in which all blocks are  $A(G_{1},H).$ So we have:

\bl{an5}  Let $H$ be a subset of the group $G$  such that $1\notin H$ and $H=H\up$. Let $G_{1}$ be the smallest subgroup of $G$ containing $H.$ Then $\gG(G,H)$ and $\Cay(G_{1},H)$ have the same distinct eigenvalues. \el

Thus, such a generalization is not essential. However, for the study of \eis of $\Cay(G_1,H)$ by  means of \rep theory one sometimes prefers to deal with the adjacency matrix of $\Gamma(G,H)$ if the \rep theory of $G$ is simpler than that of $G_1$. In Section 6 we observe this for $G=S_n$ and $G_1=A_n$.

 There is no harm in identifying $g\in G$ with its matrix in the regular \rep of $G,$ of size $|G|\times |G|.$ A key fact for adjacency matrices of  Cayley graphs then is  that $\sum_{h\in H}\,h$, the sum over $h\in H$ of
the matrices just defined, is the adjacency matrix of $\Cay(G,H).$ Thus $A(G,H)=H^{+}$ under this identification. (Recall that we define $H^+$ as an element of the group algebra of $G$ over $\CC$.)
This is well known and explained, for instance, in \cite[p. 384]{SZ}. In other words,   $A$ is the image of $H^+$ in the regular \rep of the group algebra. Note that $A$ is a linear transformation
of the vector space $\cp V=\cp G$, the $\CC$-span of $G$.

\bl{AA24} Let $H$ be a connecting set in the group $G$ and let $F$ be the the normalizer of $H$ in $G.$ Then $F\times G$ acts as a group of automorphisms on $\gG=\Cay(G,H)$ by  $(f,g)\!:\,v\mapsto f^{-1}vg$ for all  $(f,g) \in F\times G$ and $v\in G.$ In particular, $\Aut(\gG)$ is vertex transitive. \el

\bp Let $u,v\in G.$ Then $uv^{-1}\in H$ if and only if $(f^{-1}ug)(g^{-1}v^{-1}f)\in f^{-1}Hf=H.$ Evidently $\{1_{F}\}\times G$ is transitive on $G.$  \enp

\section{\sc Some general results}

In this section we discuss representation theoretical aspects of eigenvalue problems for Cayley graphs.  Let $G$ be a finite group, $H\subset G$ a connecting subset, and put $H^+=\sum_{h\in H}$ as an element in the group algebra $\cp G$
of $G$ over $\cp.$ If $\rho$ is the regular \rep of $G$ then $A(G,H)=\rho(H^+).$ If we identify $g$ with $\rho(g)$ then $A(G,H)=H^+$ as discussed in Section 2.4.
 Thus, if $\phi$ is any representation of $G$ then $\phi(H^+)$ is meaningful, and the \eis of $\phi(H^+)$ are \eis of $\Gamma$. We think that this convention makes the exposition more transparent.

By standard results, $\rho$ is a direct sum of all \ir \reps $\phi$ of $G,$ each occurring with \mult $\dim \phi$. \itf  the set of all \eis of $Cay(G,H)$ (disregarding the multiplicities) is the union, over the \ir \reps $\phi$ of $G,$ of  the \eis  of $\phi(H^+)$.  See \cite[Proposition 7.1]{kr} or elsewhere.

If $\phi_0$ is the trivial \rep of $G$ then, obviously, $\phi_0(H^+)=|H|$. Conversely, if $|H|$
is an \ei of $\phi(H^+),$ where $\phi$ is an \irr of  $G$, then $\phi=\phi_0$ by Theorem~\ref{wk1} (as $\phi_0$ occurs in $\rho$ with \mult 1; here we use that $H$ is a connecting set).

\bl{sk1} Let $G_1$ be a subgroup of $G$ and let M be an \ir G-submodule
with character $\chi$. Let $H$ be a subset of $G.$\\[7pt]
$(1)$ If H is  G-stable (that is, $gHg\up=H$ for every $g\in G$) then the restriction  $H^+$ to M acts  as a scalar matrix
$\mu\cdot \Id$. If $H=h^G$ is a conjugacy class then $\mu=|H|\cdot \frac{\chi(h)}{\chi(1)}=|G:C_G(h)|\cdot \frac{\chi(h)}{\chi(1)}$. \\[7pt]
$(2)$ Let $L$ be an \ir $G_1$-module occurring in $M|_{G_1}$ exactly once.\\[5pt]
$(i)$ Suppose that H is $G_1$-stable. Then $H^+$ acts scalarly on L. \\[5pt]
$(ii)$ Suppose that $H=H_1\cup H_2$, where
$H_1,H_2$ are $G_1$-stable. Then  $H^+,H^+_1$ and $ H_2^+$ act scalarly on L, and $\nu=\nu_1+\nu_2$, where  $\nu,\nu_1,\nu_2$ are the \eis of  $H^+,H^+_1$ and $ H_2^+$ on L. This remains true if $H=H_1\cup \cdots \cup H_k$, where $H_1\ld H_k$ are $G_1$-normal. (That is, each $H^+_i$ acts scalarly on L, and $\nu=\nu_1+...+\nu_k$, where $\nu_i$ is the \ei of $H^+_i$ on L. \\[5pt]
$(iii)$ Suppose that $H_1=H\cap  G_1$ and that $H_1$ is a connecting set in  $G_1$. Then $\nu_1$ is the \ei of the Cayley graph $\Cay(G_1,H_1)$ occurring on $L$. In particular, if $H_1$ is a single conjugacy class of $G_1$ and  $\chi_L$ is the character of $L$ then $\nu_1=|H_1|\cdot \frac{\chi_L(h_1)}{\chi_L(1)}$, where $h_1\in H_1$.
 \el

\bp (1) is well known. (2) (i) Clearly,    $H^+$
commutes with every $g\in G_1$. Therefore, $H^+L$ is a $G_1$-module. By assumption,  $H^+L=L$. Then, by Schur's lemma,  $ H^+$ acts scalarly on $L$. \\[5pt]
 (ii) By (2), $ H_i^+$  stabilizes   $L$ and acts on it as $\nu_i\cdot \Id$ for $i=1,2$.   As $ H^+=H^+_1+ H^+_2$, the claim follows. \\[5pt]
(iii) Let $V_1=G_1$ be the vertices of the Cayley graph  $\Cay(G_1,H_1)$. As  $L$ is an \ir
$G_1$-module, $L$ is an \ir constituent of the regular $G_1$-module $\cp V_1=\cp G_1$, and hence the \ei of $H_1^+$ on $L$ occurs as an \ei of $H_1^+$ on  $\cp G_1$. The second claim in (iii) follows from (1).
\enp

Remarks. (1) Unfortunately, $\nu_1$ and $\nu_2$ depend not only on $L$ but on both $M,L$. In other words, if $L$ is a constituent of $M'$, an \ir $G$-module not isomorphic to $M$, then $\nu_1(M,L)$ may not be equal to $\nu_1(M',L)$. However, if $H_1=H\cap G_1$ then $\nu_1 $ depends on $L$ but does not depend on $M$.

(2) If $G=S_n,\,G_1=S_{n-1}$ then every \ir $ G$-module $M$ is \mult free as $ G_1$-module (that is, every \ir constituent of $M|_{S_{n-1}}$ occurs exactly once).
This follows from the branching rule expressed in terms of Young diagrams (we cannot get the same diagram by removing distinct single boxes from a given Young diagram. This remains true for alternating groups.) In addition, if $C$ is a conjugacy class of $S_n$ then  $C\cap S_{n-1}$ is either empty or a single conjugacy class of $G_1$, as two permutations $x,y\in G_1$ are conjugate in $S_n$ \ii they are conjugate in $G_1$. In the following we view $\Sym(1\ld k)$ as the largest subgroup of $S_{n}$ which fixes all $i\not\in \{1,\dots,k\},$ etc.

\def\esp{eigenspace }

\bl{ne1}
  Let $1\leq r<n-1$ and $G=S_{n}.$ Let $M$ be the natural permutation $G$-module  and  $F=\Sym(1\ld r)\times \Sym(r+1\ld n)\subset G.$
Suppose that the subset $H\subset G$ is normalized by $F,$ that is $xHx\up=H$ for all $x\in F.$   If $r\geq 2$ then  $H^+$ has at most $4$ distinct \eis on $M.$ Two of these have  \mult $1$,  and two  others, if distinct, have multiplicity $r-1$ and $n-r-1$. If $r=1$ then  $H^+$ has at most $3$ distinct \eis on $M.$ Two of these have  \mult $1,$ the remaining \ei has multiplicity  $n-2.$
 \el
 \bp Set $F_1=\Sym(1\ld r)\times \Id$ and $F_2=\Id\times \Sym( r+1\ld n)$. As $H$ is $F$-invariant, $H^+$ commutes with $F$ in the group algebra $\cp S_n$ and hence in ${\rm End}\, M$. We have $M|_F=M_1\oplus M_2$, where $M_i$ is an $F$-module such that $M_i|_{F_i}$
 the natural permutation modules for $F_i$ and $M_i|_{F_j}$ is trivial for $i\neq j$, $i,j=1,2$.  Then $M_1=T_1\oplus L_1$, where $T_1$ is the trivial $F_1$-module and $L_1$  is irreducible of dimension $ r-1$. In particular, $L_1=0$ when $r=1.$
 Similarly, $M_2=T_2\oplus L_2$,  where $L_2$ is irreducible of dimension $n-r-1$. Clearly, $L_1,L_2$ are non-equivalent non-trivial  $F$-modules. So $M|_F=L_1\oplus L_2\oplus T$, where $T$ is a trivial $F$-module of dimension 2.  By Schur's lemma,  $H^+$ acts scalarly on $L_1,L_2$, and hence $H^+$ has at most four distinct \eis on $M$. Clearly, if $r=1$ then $L_0=0$ and so $H^+$ has at most three distinct eigenvalues.
  \enp

\section{Symmetric group: the largest conjugacy classes}

For  basic definitions concerning the characters of symmetric  and alternating groups see~\cite{J,JK}.

Let $G=S_n$ and let $1\leq k\leq n.$ We denote the set of all $k$-cycles in $G$ by $C(n,k).$ Since two permutations  are conjugate to each other in $G$ if and only if they have the same cycle type we have $|C(n,k)|=|G:C_{G}(g)|$ when $g$ is any $k$-cycle and $C_{G}(g)$ denotes the centralizer of $g$ in $G.$ In particular,
$|C(n,n)|=(n-1)!$ and $|C(n,n-1)|=n(n-2)!.$ By the same argument, the enumeration of permutations by cycle type shows that all  other conjugacy class sizes in $G$ are smaller if $n>4$. It is well-known that the smallest subgroup of $S_{n}$ containing $C(n,k)$ is $S_{n}$ if $k$ is even and  $A_{n}$ if $k$ is odd.

In this section we  compute the second largest eigenvalue  of $H^+$ when $H=C(n,n)$ or $H=C(n,n-1).$ For this we first quote a well-known result for the cycles of length $n$ in \reps of $S_n$ \cite[Lemma 21.4]{J}.

Below let $\chi=\chi_{\mu}$ denote an \ir character of $G$ labeled by the  Young diagram $\mu$.

\bl{jj1} Let $h\in S_n$ be an $n$-cycle.
 Then $\chi(h)=0$ if $\mu$ is not a hook. If $\mu=[n-m,1^m]$ is a hook with leg length m then $\chi(h)=(-1)^m$.\el

Note that for $n\geq 4$ we have  $\chi_{[n-2,1^2]}(1)=(n-1)(n-2)/2$ and $\chi_{[n-2,2]}(1)=n(n-3)/2$,
so $\chi_{[n-2,2]}(1)+1=\chi_{[n-2,1^2]}(1)$.

\bl{jn1}  Let $h\in S_n$ be an $n$-cycle, $n>4$, and let $H=h^G$ be the conjugacy class of $h.$\\[8pt]
$(1)$ Suppose that $n$ is odd. Then the maximum of the \ir character ratio $\chi(h)/\chi(1)$ with $\chi(1)\neq 1$ is attained at
$\chi_{[n-2,1^2]}$ and equals  $2/(n-1)(n-2)$. The corresponding \ei of $H^+$  equals  $%\lambda_{[n-2,1^2]}=
2|H|/(n-1)(n-2)=2(n-3)!$.\\[8pt]
$(2)$ Suppose that n is even. Then the maximum of the \ir character ratio $\chi(h)/\chi(1)$ with $\chi(1)\neq 1$ is attained at
$\chi_{[2,1^{n-2}]}$ and equals $1/(n-1)$. The corresponding  \ei of $H^+$  equals $%\lambda_{[2,1^{n-2}]}=
|H|/(n-1)=(n-2)!$
\el

\bp By Lemma 3.1(1), it suffices to prove the claim on the ratio, and, by Lemma \ref{jj1}, only for the hook characters $\chi_{[n-m,1^m]}$ with $m$ even. In this case $\chi_{[n-m,1^m]}(h)=1$,
so we have to show that $\chi_{[n-m,1^m]}(1)$ is minimal for $m=2$ for $n$ odd and $n-2$ for $n$ even.

(1) Here $\chi_{[n-2,1^2]}(h)=1=\chi_{[3,1^{n-3}]}(h)$ and $\chi_{[n-2,1^2]}(1)= (n-1)(n-2)/2=\chi_{[3,1^{n-3}]}$. As $\chi_{[n-1,1]}(h)=\chi_{[2,1^{n-2}]}(h)=-1$,
it suffices to show that  $\chi_{[n-m,1^m]}(1)>(n-1)(n-2)/2$ for $2<m<n-3$.  For such $m$  we have

{\small
$$\chi_{[n-m,1^m]}(1)=\frac{n!}{n(n-m-1)!m!}%\begin{pmatrix}n-1\cr m\end{pmatrix}\nonumber\\&=
=\frac{(n-1)(n-2)\cdot...\cdot(n-m+1)}{2\cdot3\cdot...\cdot m}>\frac{(n-1)(n-2)}{2}.$$}

(2) In this case $\chi_{[n-1,1]}(h)=-1$, $\chi_{[2,1^{n-2}]}(h)=1$ as these characters differ by sign character multiple. In addition, $\chi_{[2,1^{n-2}]}(1)=n-1<(n-1)(n-2)/2<\chi_{[n-m,1^m]}(1)$ for $3<m<n-4$.
So the result follows. \enp
 \bl{jn2}  Let $h\in G=S_n$, $n>4$, be an $(n-1)$-cycle and let $H$ be the conjugacy class of $h.$ \\[8pt]
$(1)$ Suppose n is even.
Then the maximum of the \ir character ratio $\chi(h)/\chi(1)$ with $\chi(1)\neq 1$ is attained at
$\chi_{[n-3,2,1]}$ and equals $3/n(n-2)(n-4)$; the corresponding \ei of $H^+$ equals $%\lambda_{[n-3,2,1]}=
3|H|/n(n-2)(n-4)=3(n-3)(n-5)!$.\\[8pt]
$(2)$ Suppose n is odd. Then the maximum of the \ir character ratio $\chi(h)/\chi(1)$ with $\chi(1)\neq 1$ is attained at
$\chi_{[2,2,1^{n-4}]}$ and equals $2/n(n-3)$; the corresponding \ei of $H^+$  equals $%\lambda_{[2,2,1^{n-4}]}=
2|H|/n(n-3)=2(n-2)(n-4)!$.\el
\bp Note that %$\chi_{[n-3,2,1]}(1)= \frac{n(n-2)(n-4)}{3}$ and
$|H|=n!/(n-1)=n(n-2)!$. Let  $\chi=\chi_\mu$ be an \ir character of $G$  labeled by the Young diagram $\mu$. Suppose that $\chi(1)>1$. Let $h\in H$; we can assume that $h\in C_G(n)\cong S_{n-1}$.

$(i)$ We have $\chi(h)=0$ unless $\mu=[n-m,2,1^{m-2}]$ with $(2\leq m\leq n-2)$.
To show this let $\chi|_{S_{n-1}}=\nu_1+\cdots+\nu_k$, where $\nu_1\ld \nu_k$ are \ir character of $S_{n-1}$; then  $\chi(h)=\nu_1(h)+\cdots+\nu_k(h)$.
By Lemma \ref{jj1}, $\nu_i(h)=0$ unless $\nu_i$ corresponds to a hook. So $\chi(h)=0$ unless $\mu$ is obtained from a hook diagram by adding one box. Obviously, either $\mu$ itself is a hook or $\mu$ is obtained from a hook diagram by adding one box at the $(2,2)$-position, that is, $\mu=[n-m,2,1^{m-2}]$ for $m=2\ld n-2$.

 Suppose $\mu=[n-m,1^m]$ is a hook, $1\leq m\leq n-1$. Then $\chi|_{S_{n-1}}=\nu_1+\nu_2$, where the diagrams of $\nu_1,\nu_2$ are $[n-m-1,1^m]$ and $[n-m,1^{m-1}]$. The legs of these hooks are of distinct parity, so $\nu_1(h)+\nu_2(h)=0$ by Lemma \ref{jj1}.

$(ii)$ For $2\leq m\leq  n-2$ set $\chi_m=\chi_{\mu}$ for $\mu=[n-m,2,1^{m-2}]$.
Then $\chi_{\mu}(h)=(-1)^{m-1}$.

Indeed, if $2<m<n-1$ then $\chi_m|_{S_{n-1}}=\nu_1+\nu_2+\nu_3$, where $\nu_1,\nu_2,\nu_3$ are \ir characters of $S_{n-1}$ with Young  diagrams  $[n-m-1,2,1^{m-2}]$, $[n-m,2,1^{m-3}]$ and $[n-m,1^{m-1}]$, respectively.
So $\nu_1(g)=\nu_2(g)=0$ by Lemma \ref{jj1}, whereas  $\nu_3(g)=(-1)^{m-1}$.
So $\chi_m(h)=(-1)^{m-1}$. % and the ratio is $(-1)^{m-1}/\chi(1)$.
If $m=2$ then $\chi_2|_{S_{n-1}}=\nu_1+\nu_3$, whose Young  diagrams
 are $[n-3,2]$ and $[n-2,1]$. As above, $\nu_1(h)=0$, $\nu_3(h)=-1$ so $\chi_2(h)=-1$.
 Let $m=n-2$, so   $\mu=[2,2,1^{n-4}]$. Then  $\chi_{[2,2,1^{n-4}]}(h)=(-1)^n\chi_{[n-2,2]}(h) $ (because $h$ takes the value $-1$ at the sign non-trivial character \ii the permutation $h $ is even).
 So the claim follows.

Thus, $\chi_\mu(h)=1$ if $\mu=[n-m,2,1^{m-2}]$ with $m$ odd, and $\chi_\mu(h)<1$ otherwise (assuming $\chi_\mu(1)\neq 1$). Therefore, to complete the proof of the lemma it suffices to determine %show that
the minimum of $\chi_\mu(1):\,\mu=[n-m,2,1^{m-2}], \, m $\,\,odd.

$(iii)$ Let $2\leq m\leq (n-1)/2$. Then
\begin{eqnarray}
\chi_m(1)&=&\frac{n!}{(n-m-2)!(n-m)(n-1)m(m-2)!}\nonumber\\
< \,\,\,\chi_{m+1}(1)&=&\frac{n!}{(n-m-3)!(n-m-1)(n-1)(m+1)(m-1)!}
\end{eqnarray}
%\med
%\noindent
as $(n-m-1)(m+1)(m-1)<m(n-m)(n-m-2)$. (Indeed, $m\leq (n-1)/2$ implies $n-m\geq m+1$ and
$m(n-m)(n-m-2)\geq (n-m-2)m(m+1)=(n-m-1)m(m+1)-m(m+1)\geq(n-m-1)m(m+1)-(n-m-1)(m+1)=
(n-m-1)(m+1)(m-1)$.)

%\med
$(iv)$ By $(ii)$ and $(iii)$, the maximum of the ratio $\chi_m(h)/\chi_m(1)$ is attained for $m=n-2$ if $n$ is even, and for $n=3$ is $n$ is odd. This implies the result.\enp

%\med
We expect that the behaviour of $\chi_{[n-1,1]}(h)$ is not typical when $h$ is a $k$-cycle with $k=n,n-1.$ Namely, for $k<n-1$ we expect that the second largest \ei is attained for  $\chi=\chi_{[n-1,1]}$.

 By Lemma \ref{an5} we have:

 \bl{an2} For $n>4$ let $H$ be the set of all $n$-cycles in $A_n$ if $ n>3$ is odd and all $n-1$-cycles in $A_n$ if $n>5$ is even. Then $H$ is a connecting set in $V=A_n$ and the largest second \ei of $H^+$ on the Cayley graph $Cay(A_n,H)$ equals $2(n-3)!$ and $3(n-3)(n-5)!$ respectively. \el

 \bp[{\it Proof of Theorems {\rm \ref{1A} {\it and}~\ref{1B}}}] If $G=A_{n}$ the result follows from Lemma~\ref{an2}. If  $G=S_{n}$ then $\Cay(G,H)$ is bipartite, with partition $G=A_{n}\cup(S_{n}\setminus A_{n}),$ since $u\sim v$ implies that $uv^{-1}\in H\subset S_{n}\setminus A_{n}$ and so $u,\,v$ do not have the same parity.   Hence $|H|$ and $-|H|$ are eigenvalues of $\Cay(G,H)$ by Theorem~\ref{wk1}, corresponding to the trivial and the alternating characters of $S_{n},$ respectively. All other characters have degree $>1$ and so the result follows from Lemmas~\ref{jn1} and~\ref{jn2}. \enp

\section{\sc The natural $S_n$-module and equitable partitions}

Let $G=S_{n}$ and let $K$ be the stabilizer of some point in $\{1\ld n\}.$ Then the set $G/K$ of cosets of $K$ in $G$ and  $\{1\ld n\}$ are isomorphic as  $G$-sets. The associated permutation module over $\cp$ is the natural module for $G,$ denoted $M=1^{G}_{K}.$ This is probably the most important  $S_n$-module. Moreover, we expect that in 
many cases the second largest \ei of a Cayley graph over $S_{n}$ occurs on $M$ (although we have no means to justify this).

For $1\leq r<k\leq n$ let $C(n,k;r) \subseteq C(n,k)$ be the set of all $k$-cycles in $S_{n}$ which move all the points in the set $\{1,2\ld r\}.$ That is to say, $g=(i_{1},i_{2}\ld i_{k})(i_{k+1})\dots(i_{n})\in C(n,k;r)$ if and only if $\{1,2\ld r\}\subset \{i_{1},i_{2}\ld i_{k}\}.$ Clearly, $|C(n,k;r)|=(k-1)!{n-r\choose k-r}.$

\bl{n0}  Let $1\leq r<k\leq n$ and let X be the smallest subgroup of $S_n$ containing $C(n,k;r).$ Then $X=S_n$ if $k$ is even, and $X=A_n$ if $k$ is odd.\el

\bp This holds when $k=n$ since $C(n,n;r)=C(n,n)$ for all $r$ and so it suffices to show that $X\supseteq \Alt_{n}$ for  $1\leq r<k<n. $ Since for every  $i\in \{2\ld n\}$ there is some $h\in C(n,k;r)$ with $h(i)=1$ it follows that $X$ is transitive on $\{1,2\ld n\}.$ If $i<n$  we may assume additionally that $h(n)=n$ since $k<n.$ Hence the stabilizer of $n$ in $X$ is transitive on $\{1,2\ld n-1\}$ and so  $X$ is doubly transitive. The result follows if $k=2$ or $=3,$ since a double transitive group containing a $2$- or $3$-cycle contains $\Alt_{n}.$ For $k\geq 4$ consider the elements $h=(1,2\ld k-3,k-2,k-1,k)$ and $h'=(k-1,k,k-2,k-3\ld 2,1).$ Then $h'h=(1)(2)\dots (k-3)(k-2,k,k-1)\in X$ is a $3$-cycle and hence the result follows.\enp

\vspace{-0.2cm}
\begin{theo}\label{AA52}  Let $G=S_{n}$ % ($k$ even), $G=A_{n}$ ($k$ odd)
and $H=C(n,k;r)$ for $2\leq r<k<n$. Then the eigenvalues of $H^{+}$ on the natural module
$M$ are \\[5pt]
$\mu_1=|H|=(k-1)!{n-r\choose k-r}$,\,\,\,\,
$\mu_{2}=(k-2)! {n-r \choose k-r} \frac{1}{n-r} \big((k-1)(n-k) - \frac{(k-r-1)(k-r)}{n-r-1}\big)$\\[5pt]
$\mu_3=(k-2)!{n-r\choose k-r}\big(\frac{r(n-k)}{n-r}-1\big)$ \,and\, $\mu_4=-(k-2)!{n-r\choose k-r}.$
\end{theo}

In the remainder of this section let $k,$ $G$ and $H$ be as in the theorem and put  $\Gamma=\Cay(G,H).$ We exhibit two equitable partitions of $\gG$ so that  their associated  eigenvalues  belong to \\$\{\mu_{1},\, \mu_{2},\,\mu_{3},\,\mu_{4}\}.$ The proof is completed at the end of the section.

First let $1\leq r$ and let $K_{1}=K=\Sym(1\ld n-1)$ be the stabilizer in $G$ of $n.$ Define  the partition $P_{1}$ of the vertices of $\gG$ by
\begin{eqnarray} V_{1}&=&K=\{g\in G:\,\,g(n)=n\}; \nonumber\\ V_{2}&=& (n,1)K\cup\dots\cup (n,r)K=\{g\in G:\,\,g(n)\in\{1,...,r\}\}; \nonumber\\
V_{3}&=& (n,r+1)K\cup\dots\cup (n,n-1)K=\{g\in G:\,\,g(n)\in\{r+1,...,n-1\}\}.\end{eqnarray}

Below ${a\choose b}$ for integers $a\geq b>0$ denotes the number of choices of $b$ elements from a set of $a$ elements. For uniformity we need to make the cases with $b=0$ and $b=-1$ meaningful: we define
${a\choose 0}$ to be 1, and ${a\choose -1}$ to be $0$.

\bl{ep1} Let $1\leq r<k<n.$ Then $P_{1}$ is an equitable partition of $\gG$ with matrix

\vspace{-0.5cm}
$$B_1=\begin{pmatrix}     \,\,\,(k-1)!{n-r-1\choose k-r}  &   r(k-2)!{n-r-1\choose k-r-1}        &  (n-r-1)(k-2)!{n-r-2\choose k-r-2}  \,\,\,\cr   &&\cr
(k-2)!{n-r-1\choose k-r-1}    &     (r-1)(k-2)!{n-r\choose k-r}  &     (n-r-1)(k-2)! {n-r-1\choose k-r-1}      \cr   &&\cr
(k-2)!{n-r-2\choose k-r-2}   &  r(k-2)!{n-r-1\choose k-r-1}           & (n-r-2)(k-2)!{n-r-2\choose k-r-2}+(k-1)!{n-r-1\choose k-r}     \end{pmatrix}$$

$$=(k-2)!\begin{pmatrix}     \,\,\,(k-1){n-r-1\choose k-r}  &   r{n-r-1\choose k-r-1}        &  (k-r-1){n-r-1\choose k-r-1}  \,\,\,\cr   &&\cr
{n-r-1\choose k-r-1}    &     (r-1){n-r\choose k-r}  &     (n-r-1) {n-r-1\choose k-r-1}      \cr   &&\cr
{n-r-2\choose k-r-2}   &  r{n-r-1\choose k-r-1}           & (n-r-2){n-r-2\choose k-r-2}+(k-1){n-r-1\choose k-r}     \end{pmatrix}.$$

The \eis of $B_1$ are  $\mu_1=(k-1)!{n-r\choose k-r},$\\
$\mu_{2}=(k-2)! {n-r \choose k-r} \frac{1}{n-r} \big((k-1)(n-k) - \frac{(k-r-1)(k-r)}{n-r-1}\big)$ and \\
$\mu_3=(k-2)!{n-r\choose k-r}(r-1-r\frac{k-r}{n-r}\,)=
(k-2)!{n-r\choose k-r}\big(\frac{r(n-k)}{n-r}-1\big).$
\el

\bp Put $F=\Sym(1\ld r)\times \Sym(r+1\ld n-1)$ and let $F\times K$ act on $V$ as in Lemma~\ref{AA24}. Then $F\times K$ stabilizes each $V_{i}$ and acts transitively on it.  Hence $P_{1}$ is equitable by Lemma~\ref{AA23}.

Let $B_1=(\be_{ij})$ for $1\leq i,j\leq 3$ and so $\be_{ij}=  |N(v_{i})\cap V_{j}|=|Hv_{i}\cap V_{j}|=|H\cap V_{j}v_{i}^{-1}|$ for $v_{i}\in V_{i}.$ Taking $v_{1}=1_{G},\,v_{2}=(n,1)$ and $v_{3}=(n,n-1)$ we have

($i=1$):\,\, $\be_{11}=|H\cap K|=(k-1)!{n-r-1\choose k-r};$\\
\smallskip
 $\be_{12}=|H\cap V_{2}|= r|H\cap Kv_{2}|=r(k-2)!{n-r-1\choose k-r-1};$\\
\smallskip
 $\be_{13}=|H\cap V_{3}|=(n-r-1)\,|H\cap v_{3}K|=(n-r-1)(k-2)!{n-r-2\choose k-r-2}$.

($i=2$):\,\,$\be_{21}=|H\cap K(n,1)|=|\{h\in H\,:\,h(1)=n \}|=(k-2)!{n-r-1\choose k-r-1};$   \\
\smallskip $\be_{22}=|H\cap V_{2}v_{2}|=|H\cap (n,1)Kv_{2}|+|H\cap (n,2)Kv_{2}|+\dots +|H\cap (n,r)Kv_{2}|=\\ {}\hskip1cm=0+(r-1)\,|(n,2)Kv_{2}\cap H|=(r-1)(k-2)!{n-r\choose k-r};$ \\
\smallskip$\be_{23}=|H\cap V_{3}v_{2}|=|H\cap (n,r+1)K(n,1)|+\dots+|H\cap (n,n-1)K(n,1)|=
(n-r-1)(k-2!) {n-r-1\choose k-r-1}$.

$(i=3)$:\,\,  $\be_{31}=|H\cap V_{1}v_{3}|=(k-2)!{n-r-2\choose k-r-2};$\\
\smallskip
$\be_{32}=|H\cap V_{2}v_{3}|=r(k-2)!{n-r-1\choose k-r-1};$ \\
\smallskip
$\be_{33}=|H\cap V_{3}v_{3}|=(n-r-2)(k-2)!{n-r-2\choose k-r-2}+(k-1)!{n-r-1\choose k-r}$ .

This gives the matrix above after some further transformations. We have confirmed the eigenvalues using a symbolic calculator such as Wolfram$\!|\!$Alpha~\cite{WA}.   \enp

Next let $2\leq r$ and let $K_{2}=K=\Sym(2\ld n)$ be the stabilizer in $G$ of $1.$ Define  the partition $P_{2}$ of the vertices of $\gG$ by  \begin{eqnarray}
V_{1}&=&K=\{g\in G:\,\,g(1)=1\} \nonumber\\
V_{2}&=& (2,1)K\cup\dots\cup (r,1)K=\{g\in G:\,\,g(1)\in\{2,...,r\}\} \nonumber\\
V_{3}&=& (r+1,1)K\cup\dots\cup (n,1)K=\{g\in G:\,\,g(1)\in\{r+1,...,n\}\}\end{eqnarray}

%\med
\bl{ep2} Let $2\leq r<k<n.$ Then $P_{2}$ is an equitable partition of $\gG$ with matrix

\vspace{-0.3cm}
$$B_2=\begin{pmatrix}     \,\,\, 0  & (r-1)(k-2)!{n-r\choose k-r}  &  (k-r)(k-2)!{n-r\choose k-r} \,\,\,\cr   &&\cr

(k-2)!{n-r\choose k-r}   &   (k-2)!  (r-2){n-r\choose k-r} &    (k-r)(k-2)!{n-r\choose k-r}     \cr   &&\cr

(k-2)!  {n-r-1\choose k-r-1}  &  (r-1)(k-2)!{n-r\choose k-r-1}           &(k-2)!(n-r-1)  {n-r-2\choose k-r-2}   +(k-1)!{n-r-1\choose k-r}    \end{pmatrix}$$
$$=(k-2)!{n-r\choose k-r} \begin{pmatrix}     \,\,\, 0  & r-1  &  k-r \,\,\,\cr   &&\cr
1   &     r-2 &    k-r     \cr   &&\cr
\,\,\frac{k-r}{n-r} & \,\, (r-1)\frac{k-r}{n-r}          & \,\,\,k-1 -r\frac{(k-r)}{n-r}  \,\,
\end{pmatrix}. $$

The \eis of $B_2$ are $\mu_1=(k-1)!{n-r\choose k-r}$,\,  $\mu_3=(k-2)!{n-r\choose k-r}\big(\frac{r(n-k)}{n-r}-1\big)$ and \\$\mu_4=-(k-2)!{n-r\choose k-r}$.
\el

\bp Put $F=\Sym(2\ld r)\times \Sym(r+1\ld n)$ and let $F\times K$ act on $V$ as in Lemma~\ref{AA24}. Then $F\times K$ stabilizes each $V_{i}$ and acts transitively on it.  Hence $P_{1}$ is equitable by Lemma~\ref{AA23}.

Let $B_2=(\be_{ij})$ for $1\leq i,j\leq 3$ and so $\be_{ij}=  |N(v_{i})\cap V_{j}|=|Hv_{i}\cap V_{j}|=|H\cap V_{j}v_{i}^{-1}|$ for $v_{i}\in V_{i}.$ Taking $v_{1}=1_{G},\,v_{2}=(2,1)$ and $v_{3}=(n,1)$ we have

\smallskip
($i=1$):\,\, $\be_{11}=|H\cap K|=0;$ \\
\smallskip
 $\be_{12}=|H\cap V_{2}|=(r-1)(k-2)!{n-r\choose k-r};$\\
\smallskip
$\be_{13}=|H\cap V_{3}|=(n-r)(k-2)!{n-r-1\choose k-r-1}=(k-r)(k-2)!{n-r\choose k-r}$. 

\smallskip
($i=2$):\,\,$\be_{21}=|H\cap K(12)|=(k-2)!{n-r\choose k-r};$  \\
\smallskip $\be_{22}=|H\cap V_{2}(1,2)|=(r-2)(k-2)!{n-r\choose k-r};$\\
\smallskip
$\be_{23}=|H\cap V_{3}(1,2)|=(n-r)(k-2)!{n-r-1\choose k-r-1}=(k-r)(k-2)!{n-r\choose k-r}.$

\smallskip
($i=3$):\,\,$\be_{3,1}=|H\cap K(1,n)| =(k-2)!  {n-r-1\choose k-r-1} ;$\\
\smallskip$\be_{32}=|H\cap V_{2}(1,n)|=   (k-2)!  (r-1){n-r-1\choose k-r-1}   ;$\\
\smallskip$\be_{33}=|H\cap V_{3}(1,n)|=  (k-2)!(n-r-1)  {n-r-2\choose k-r-2}   +(k-1)!{n-r-1\choose k-r} =  \\{} \hskip1cm =(k-2)!(k-r-1)  {n-r-1\choose k-r-1} +(k-1)!{n-r-1\choose k-r}   .$

This gives the matrix above after some further transformations. We have confirmed the eigenvalues using a symbolic calculator.   \enp

We emphasize that $P_{1}$ and $P_{2}$ are equitable partitions of $V=G$ and that both are refined by the coset partition $P_{0}=g_{1}K\cup g_{2}K\cup\dots\cup g_{n}K $  which gives rise to the natural module $M.$ Therefore by Lemma~\ref{AA22} all eigenvalues $\mu_{i}$ for $1\leq i\leq 4$ in Theorem~\ref{AA52} are eigenvalues of $\Cay(G,H)$ for $H=C(n,k;r).$

\smallskip
{\it Proof of Theorem {\rm \ref{AA52}}.}~Let $1\leq r<k<n,$ $H=C(n,k,r)\subset Sym(1\ld n)$ and let $\mu_{1},\,\mu_{2},\,\mu_{3},\,\mu_{4}$ be the eigenvalues stated in Lemmas~\ref{ep1} and~\ref{ep2}.  By Lemma~\ref{AA22} we have $\mu_{1}=\gl_{1}(\gG).$ Since $G/K_{1}$ and $G/K_{2}$ are permutationally isomorphic, by Lemma 2.2, $\mu_{1},\,\mu_{2},\,\mu_{3},\,\mu_{4}$ are distinct eigenvalue of $H^{+}$ on $M.$  Next we note that  $H$ is invariant under conjugation by  $F=\Sym(1\ld r)\times \Sym(r+1\ld n).$ By Lemma~\ref{ne1}, $H^{+}$ has at most four eigenvalues and hence the result. \dne

{\it Proof of Theorem~{\rm {\ref{th13-}}.}}~By the comment above the theorem follows from Theorem~{\ref{AA52} and Lemma~\ref{AA231}. \dne

%%%%%%%%%%%%%%%%%%%%%%%%%%%%%%%%%%%%%%%%%%%%%%%%%%%%%%%%

\section{Proof of Theorem \ref{th13}}

In this section let $4<n,$ $2\leq r\leq n-2$ and $H=C(n,r+1;r)\subset S_n$. Let $M$ be the natural $S_n$-module.

\bl{me4} %Let $H$ be as in Lemma {\rm\ref{ne1}}.
The  multiplicities of the \eis of $H^+$ on M are as follows:
{\small
\begin{center}
\begin{tabular}{|c|c|c|c|c|}
\hline
{\rm Eigenvalue of $H^+$ on} $M$&$r!(n-r)$&$r!(n-r-1)$&$(r-1)(r-1)!(n-r-1)-r!$&$-(r-1)!(n-r)$\cr
\hline
{\rm the multiplicity}&$1$&$n-r-1$&$1$&$r-1$\cr
\hline
\end{tabular}
\end{center}}\el

\begin{proof} The \eis of $|H^+|$ on $M$ are determined in  Theorem~\ref{AA52}, where we take $k=r+1$. %Proposition \ref{ep2}.
Note that the trace of $h\in H$ on $M$ equals $n-r-1$. So the trace of $H^+$ equals $|H^+|(n-r-1)=r!(n-r)(n-r-1)$. We know that the \ei $|H^+|$ has \mult 1 (Theorem \ref{wk1}), and the multiplicities $x,y,z$ of the three other \eis %determined in Lemma \ref{ei4}
are in the set  $\{1,n-r-1,r-1\}$ by Lemma \ref{ne1}. So we have
{\small
$$|H^+|(n-r-1)=r!(n-r)(n-r-1)=r!(n-r)+xr!(n-r-1)+y((r-1)(r-1)!(n-r-1)-r!)-z(r-1)!(n-r).$$}\\[-10pt]
If we take $x=n-r-1$, $y=1$, $z=r-1$ then the equality holds. In addition, the equality fails for any other choice of $\{x,y,z\}\in\{1,n-r-1,r-1\}$. Whence the result. \end{proof}

%\med
\begin{corol}\label{co2} Let $M'$ be an $S_n$-module
whose composition factor dimensions are at most $n-1$. Then the second largest \ei of $H^+$ on $M'$ does not exceed $r!(n-r-1)$.\end{corol}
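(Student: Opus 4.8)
The plan is to reduce the statement to the $S_n$-modules $M'$ that can actually occur and then invoke the already-computed eigenvalue data on the natural module. First I would recall that the irreducible $S_n$-modules of dimension at most $n-1$ are exactly the trivial module $1_{S_n}$, the sign module, and the two modules of dimension $n-1$, namely the standard module $[n-1,1]$ and its tensor with the sign, $[2,1^{n-2}]$. Any $M'$ whose composition factors all have dimension $\le n-1$ is therefore a direct sum (up to composition series; since $\mathbb{C}S_n$ is semisimple, a genuine direct sum) of copies of these four modules. Hence it suffices to bound the second largest eigenvalue of $H^+$ on each of these four irreducibles, and then observe that the eigenvalues of $H^+$ on a direct sum are just the union of the eigenvalues on the summands, so the second largest over $M'$ is at most the maximum over the summands of their largest "non-$\lambda_1$" eigenvalue — with the caveat that $|H^+| = r!(n-r) = \lambda_1$ occurs only on $1_{S_n}$ (or on the sign module in the bipartite $A_n$-versus-$S_n$ situation, giving $-|H^+|$, which is not an issue for an upper bound).

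Next I would handle the four pieces explicitly. On $1_{S_n}$ the only eigenvalue is $|H^+| = r!(n-r)$; this is $\lambda_1$ and contributes nothing to the second-largest bound once we know it appears with multiplicity one in the relevant decomposition. On the sign module $H^+$ acts as the scalar $\pm|H^+|$ by Lemma~\ref{sk1}(1) (it is $(-1)^{\text{sign of }h}|H^+|$ for $h\in H$, i.e. $|H^+|$ when $r+1$-cycles are even and $-|H^+|$ otherwise); either way this is $\le |H^+|$ but more importantly for the \emph{second} largest we note it can equal $|H^+|$ only in the $A_n$ case where it coincides with the trivial module's eigenvalue, and in that case one simply takes the next eigenvalue — but cleanly, the cleanest route is: the standard module $[n-1,1]$ is a submodule of the natural module $M = 1_{S_n} \oplus [n-1,1]$, so by Theorem~\ref{AA52} / Lemma~\ref{me4} the eigenvalues of $H^+$ on $[n-1,1]$ are exactly $\mu_2, \mu_3, \mu_4$, i.e. $r!(n-r-1)$, $(r-1)(r-1)!(n-r-1)-r!$, and $-(r-1)!(n-r)$, whose maximum is $r!(n-r-1)$ (this inequality $r!(n-r-1) \ge (r-1)(r-1)!(n-r-1)-r!$ is a trivial check: the left side is $r\cdot(r-1)!(n-r-1)$, larger than $(r-1)(r-1)!(n-r-1)$ by $(r-1)!(n-r-1) > 0 \ge -r!$). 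Finally on $[2,1^{n-2}] = [n-1,1]\otimes\mathrm{sgn}$ the eigenvalues of $H^+$ are those on $[n-1,1]$ each multiplied by $\pm1$ (the sign of an element of $H$), so they all have absolute value $\le r!(n-r-1) = |\mu_2|$, hence are $\le r!(n-r-1)$ as well.

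Putting these together: the multiset of eigenvalues of $H^+$ on $M'$ is contained in $\{\,|H^+|,\ \pm r!(n-r-1),\ \pm((r-1)(r-1)!(n-r-1)-r!),\ \pm(r-1)!(n-r)\,\}$, with $|H^+|$ occurring only from trivial composition factors. Deleting one copy of $|H^+|$ (or noting $M'$ has a unique eigenvalue equal to $|H^+|$ unless $M'$ contains a trivial summand, in which case we delete exactly one), the largest remaining value is at most $r!(n-r-1)$, which is the claim. I expect the only real subtlety is bookkeeping around the sign module in the $A_n$-setting so that one does not accidentally over- or under-count the eigenvalue $\pm|H^+|$; but since the corollary asks only for an upper bound $\le r!(n-r-1)$, this is harmless — even if a spurious second copy of $|H^+|$ appeared it would not arise here, because distinct irreducibles other than trivial cannot contribute $|H^+|$ by the connecting-set remark following Lemma~\ref{sk1}. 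So there is no genuine obstacle; the proof is a short assembly of the classification of small-dimensional $S_n$-irreducibles with the eigenvalue table already established in Lemma~\ref{me4}.
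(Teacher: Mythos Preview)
Your approach is essentially the paper's: classify the irreducible $S_n$-modules of dimension at most $n-1$ as constituents of $M$ or of $M\otimes\mathrm{sgn}$, then read the eigenvalues off Lemma~\ref{me4}. One point you gloss over: when you pass to $[2,1^{n-2}]$ you assert that all eigenvalues on $[n-1,1]$ have absolute value at most $r!(n-r-1)$, but you only checked $\mu_2\ge\mu_3$; you also need $|\mu_4|=(r-1)!(n-r)\le r!(n-r-1)$, equivalently $n(r-1)\ge r^2$, which holds for $n>4$ and $2\le r\le n-2$ --- and this is precisely the inequality the paper singles out in its last line.
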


\begin{proof}  Let $\rho$ be an \ir characters of $S_n$ of degree at most   $n-1$. Then $\rho $ is well known to be a constituent either of $M$ or $M\otimes \si$, where $\si$ denotes the sign \rep of $S_n$.
 \itf the \eis of $H^+$ on $M'$ are in the list of Lemma \ref{ne1} if $r$ is even, and also their negatives, if $r$ is odd. So the result follows as $r!(n-r-1)>(r-1)!(n-r)$.
\end{proof}

%\med
\bl{ss4} Let $G=S_n$ and let L be an \ir $ G$-module. Then there is a basis of L such that
the matrices $g+g\up$ $(g\in G)$ are symmetric. Consequently, if $H\subset G$ is a subset such that $H=H\up$ then the matrix of $H^+$ on L is symmetric.\el
\begin{proof} It is well known that there is a basis of $L$ such that
all matrices $g\in G$ are orthogonal, that is, if $g_L$ is the matrix of $g$ on $L$ then
$g_L^T=(g_L)\up=(g\up)_L$, where $(g_L)^T$ is the transpose of $g_L$. Then $g_L+g_L\up=g_L+(g_L)^T$, whence the claim.\end{proof}

%\med
\bl{01a} Let L be an \ir $S_n$-module and $\lam$ an \ei of $H^+$ on L. Then $\lam\leq |H|$
and $\lam=|H|$ implies $\dim L=1$.\el
\bp If $H$ is connecting then the lemma follows from Theorem \ref{wk1}. Indeed,
$\lam=|H|$ if $L$ is the trivial module, and the trivial module has \mult 1 in $\cp S_n$.
If $H$ is not a connecting set then $H$ is a connecting set of $A_n$ (Lemma \ref{n0}), and the same argument
is valid if $L$ is an $A_n$-module. So $|H|$ is an \ei of $H^+$ on $L$ \ii the restriction of
$L$ to $A_n$ contains the trivial $A_n$-module. This is well known to imply that $\dim L=1$.  \enp

%\med
 Set $H_{n-1}=H\cap K$, where $K=\Sym(1\ld n-1)$ %\cong S_{n-1}$,
 and $E=H\setminus H_{n-1}\subset \Sym(1\ld n-2,n)$. Then $|H_{n-1}|=r!(n-r-1)$ and $|E|=|H| -|H_{n-1}|=r!$

%\med
\begin{theo}\label{th3} Let L be an \ir $S_n$-module and $\dim L>n-1$. Let $\lam$
be an \ei of $H^+$ on L. Then $\lam\leq r!(n-r-1)$.
\end{theo}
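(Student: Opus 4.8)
I would prove Theorem~\ref{th3} by induction on $n$, the base being $n=r+2$ (equivalently $r=n-2$). Everything rests on the decomposition $H^{+}=H_{n-1}^{+}+E^{+}$ set up just above, together with the Weyl inequality. By Lemma~\ref{ss4} we may fix a basis of $L$ in which $H^{+}$, $H_{n-1}^{+}$ and $E^{+}$ are simultaneously symmetric (each of $H$, $H_{n-1}$, $E$ is inverse-closed), so Theorem~\ref{AA232} with $i=j=1$ in $(\ref{eH1})$ gives, for any eigenvalue $\lam$ of $H^{+}$ on $L$, the bound $\lam\le\beta_{1}+\gamma_{1}$, where $\beta_{1}$ is the largest eigenvalue of $H_{n-1}^{+}$ on $L$ and $\gamma_{1}$ the largest eigenvalue of $E^{+}$ on $L$. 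It thus suffices to show $\beta_{1}+\gamma_{1}\le r!(n-r-1)$.

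To bound $\gamma_{1}$, note that $E$ is exactly the full conjugacy class of $(r+1)$-cycles in the subgroup $\Sym(\{1\ld r,n\})\cong S_{r+1}$, so by Lemma~\ref{sk1}$(1)$ the operator $E^{+}$ acts as the scalar $r!\,\chi(c)/\chi(1)$ on each irreducible constituent of $L|_{\Sym(\{1\ld r,n\})}$, where $c$ is an $(r+1)$-cycle; hence $\gamma_{1}\le r!=|E|$. Here I use the key ``no small constituents'' fact: since $\dim L>n-1$, the module $L$ is a constituent of neither $M=[n]\oplus[n-1,1]$ nor $M\otimes\si=[1^{n}]\oplus[2,1^{n-2}]$, so by Frobenius reciprocity $L|_{X}$ has no trivial and no sign summand for any point stabilizer $X$ of $S_{n}$. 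When $r=n-2$ the subgroup $\Sym(\{1\ld n-2,n\})$ is itself a point stabilizer, and this fact together with Lemmas~\ref{jj1} and~\ref{jn1} (applied to $S_{n-1}$ and its $(n-1)$-cycles) sharpens the bound to $\gamma_{1}\le(r-1)!(n-r-1)=(n-3)!$.

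To bound $\beta_{1}$, write $L|_{K}=\bigoplus_{i}L_{i}$ into irreducible $S_{n-1}$-modules; as $H_{n-1}\subset K$, $H_{n-1}^{+}$ preserves each $L_{i}$, so $\beta_{1}=\max_{i}\beta(L_{i})$ with $\beta(L_{i})$ the top eigenvalue of $H_{n-1}^{+}$ on $L_{i}$. By the ``no small constituents'' fact no $L_{i}$ is trivial or sign, so $\dim L_{i}\ge n-2$. Suppose first $2\le r\le n-3$, so that $H_{n-1}=C(n-1,r+1;r)$ with $r+1<n-1$. If $\dim L_{i}>n-2$ the induction hypothesis (Theorem~\ref{th3} for $S_{n-1}$) gives $\beta(L_{i})\le r!((n-1)-r-1)=r!(n-r-2)$. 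If $\dim L_{i}=n-2$ then $L_{i}\in\{[n-2,1],[2,1^{n-3}]\}$; on $[n-2,1]$ the value $\beta(L_{i})$ is one of the three non-principal eigenvalues of Theorem~\ref{AA52} / Lemma~\ref{me4} computed with $n-1$ in place of $n$, and on $[2,1^{n-3}]=[n-2,1]\otimes\si$ it is $(-1)^{r}$ times such an eigenvalue; inspecting these four explicit numbers gives $\beta(L_{i})\le r!(n-r-2)$ in every case. Hence $\beta_{1}\le r!(n-r-2)$ and $\lam\le r!(n-r-2)+r!=r!(n-r-1)$, completing the induction step. For the base case $r=n-2$ one has $H_{n-1}=C(n-1,n-1)$, so Lemmas~\ref{jj1} and~\ref{jn1} together with the ``no small constituents'' fact bound the eigenvalue of $H_{n-1}^{+}$ on any non-trivial, non-sign irreducible $S_{n-1}$-module by $(n-3)!$, whence $\beta_{1}\le(n-3)!$; combined with the sharpened $\gamma_{1}\le(n-3)!$ this yields $\lam\le2(n-3)!\le(n-2)!=r!(n-r-1)$ since $n>4$. (The few very small instances reached by the recursion, in particular $r\in\{2,3\}$, can be verified directly or quoted from \cite{HH}; for $S_{4}$ the claim is vacuous, there being no irreducible module of dimension $>3$.)

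The step I expect to be the main obstacle is the dimension-$(n-2)$ constituents: one must check, from the closed forms in Theorem~\ref{AA52} / Lemma~\ref{me4} and from how the sign twist behaves according to the parity of $r$, that the eigenvalue of $H_{n-1}^{+}$ on $[n-2,1]$ and on $[2,1^{n-3}]$ never exceeds $r!(n-r-2)$ --- this is exactly the estimate that makes $\beta_{1}+\gamma_{1}$ land on $r!(n-r-1)$. A subsidiary annoyance is keeping the regimes $r\le n-3$ and $r=n-2$ cleanly apart: only in the latter does $E$ sit inside a point stabilizer, and only there does the crude bound $\gamma_{1}\le r!$ fail to close the argument, forcing the switch to the $(n-1)$-cycle estimates of Lemma~\ref{jn1}.
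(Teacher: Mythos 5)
Your proposal follows essentially the same route as the paper's own proof: the same decomposition $H^{+}=H_{n-1}^{+}+E^{+}$, the same induction on $n-r$ with base case $r=n-2$ handled via the $(n-1)$-cycle character values of Lemma~\ref{jn1}, and the same application of the Weyl inequality to the symmetrized matrices. The only difference is bookkeeping: where you inspect the eigenvalues of $H_{n-1}^{+}$ on the dimension-$(n-2)$ constituents directly from Theorem~\ref{AA52}, the paper delegates this step to Lemma~\ref{01a} and Corollary~\ref{co2}.
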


\bl{ii2}  Theorem {\rm \ref{th3}} is true for $r=n-2>2$.\el

\bp The case of $n=5$ can be checked straightforwardly. Let $n>5$.

Note that $r!(n-r-1)$ equals $(n-2)!$ for $r=n-2$.
We have   $H_{n-1}=\{(n-1,i_1\ld i_{n-2})\}$ and $E=\{(n,i_1\ld i_{n-2})\}$, where $\{i_1\ld i_{n-2}\}=\{1\ld n-2\}$. So $H_{n-1}$ is a conjugacy class in $Sym\{1\ld n-1\}$. %\cong S_{n-1}$.
Let $\al_1,\be_1$ be the maximum of \eis of $H_{n-1}^+,E^+$, respectively, on $L$. By the branching rule,   as an $S_{n-1}$-module $L$ has no \ir constituent of dimension 1. So, by Lemma \ref{jn1}, if $n-1>4$ is even then $\al_1\leq (n-3)!$,  and if $n-1$ is odd then this is at most $\al_1\leq 2(n-4)!$. As $E$ is a conjugacy class of $Sym(1\ld n-2,n)\cong S_{n-1}$, the same conclusion holds for the \ei $\be_1$ of $E^+$and $\al_1=\be_1$. By Lemma \ref{ss4}, we can assume that the matrices of $H^+, $  $H_{n-1}^+$, $E$ on $L$ are symmetric. If $\gamma_1$  is the largest \ei of $H^+ $ on $L$, then, by the Weyl inequality,  $\gamma_1\leq \al_1+\be_1= 2\al_1$.
  So $\gamma_1\leq 4(n-4)!$  if $n-1$ is odd, and $\gamma_1\leq 2(n-3)!$ if $n-1$ is even. Obviously, $\gamma_1<(n-2)!$.\enp

\bp[Proof of Theorem {\rm \ref{th3}}] As mentioned above,  as an $S_{n-1}$-module $L$ has no composition factor of dimension $1.$ Therefore, by Lemma \ref{01a}, $H_{n-1}^+$ does not have \ei $|H_{n-1}^+|=r!(n-r-1)$ on $L$.
We use induction on $n-r$ with $n-r=2$ as base of induction. The case with $r=n-2$  is settled in Lemma \ref{ii2}. By the induction assumption, if $\mu$ is an \ei of $H_{n-1}^+$ on $L$ then $\mu\leq r!(n-r-2)$.

  Let $\nu$ be the largest \ei of $E^+$ on $L$. Then $\nu\leq |E|=r!$ (by Theorem \ref{wk1} applied to the graph $(E,V)$.)
By the Weyl inequality (\ref{eH2}) applied to $L,$ we have $\lam\leq \mu+\nu\leq r!(n-r-2)+r!=r!(n-r-1)$, as required. \enp

\bp[Proof of Theorem {\rm \ref{th13}}] If $G=S_n$ then then this is equivalent to Theorem \ref{th3}.
For $A_n$ the result follows from the statement for $S_n$ and  Lemma \ref{an5}. \enp

\end{document}